\setlist[enumerate]{itemsep=0mm}
\declaretheorem[name=Theorem, numberwithin=section]{theorem}
\newtheorem{corollary}[theorem]{Corollary}
\newtheorem{lemma}[theorem]{Lemma}
\newtheorem{proposition}[theorem]{Proposition}
\newtheorem{definition}[theorem]{Definition}
\newtheorem*{example*}{Example}
\newtheorem*{assumption}{Assumption}
\newtheorem*{theorem*}{Theorem}
\newtheorem*{lemma*}{Lemma}
\declaretheoremstyle[bodyfont=\normalfont]{remark-style}
\numberwithin{equation}{section}
\theoremstyle{plain}
\newcommand{\N}{\mathds{N}}
\newcommand{\R}{\mathds{R}}
\newcommand{\m}{\varpi}
\newcommand{\BIGOP}[1]
{
\mathop{\mathchoice%
{\raise-0.22em\hbox{\huge $#1$}}%
{\raise-0.05em\hbox{\Large $#1$}}{\hbox{\large $#1$}}{#1}}}
\newcommand{\bigtimes}{\BIGOP{\times}}
\def\XXint#1#2#3{{\setbox0=\hbox{$#1{#2#3}{\int}$}
     \vcenter{\hbox{$#2#3$}}\kern-.5\wd0}}
\newcommand{\BIGboxplus}{\mathop{\mathchoice%
{\raise-0.35em\hbox{\huge $\boxplus$}}%
{\raise-0.15em\hbox{\Large $\boxplus$}}{\hbox{\large $\boxplus$}}{\boxplus}}}
\DeclareMathOperator{\dist}{dist}
\DeclareMathOperator{\dvg}{div}
\DeclareMathOperator*{\osc}{osc}
\renewcommand{\d}{\textnormal{d}}
\begin{document}

\title{Regularity of solutions to anisotropic nonlocal equations}
\author{Jamil Chaker}
\email{jchaker@math.uni-bielefeld.de}
\address{Universit\"{a}t Bielefeld, Fakult\"{a}t f\"{u}r Mathematik, Postfach
100131, D-33501 Bielefeld, Germany}
\begin{abstract} We study harmonic functions associated to systems of stochastic differential equations of the form $dX_t^i=A_{i1}(X_{t-})dZ_t^1+\cdots+A_{id}(X_{t-})dZ_t^d$, $i\in\{1,\dots,d\}$,
where $Z_t^j$ are independent one-dimensional symmetric stable processes of order $\alpha_j\in(0,2)$, $j\in\{1,\dots,d\}$. 
In this article we prove H\"older regularity of bounded harmonic functions with respect to solutions to such systems. 
\end{abstract}
\maketitle
\begin{small}\textbf{AMS 2010 Mathematics Subject Classification: } Primary 60J75; Secondary  60H10, 31B05, 60G52	                                                                                                       \end{small}

\begin{small}\textbf{Keywords:} Jump processes, Harmonic functions, H\"older continuity, Support theorem, Anisotropy, Nonlocal Operators\end{small}

\section{Introduction}
The consideration of stochastic processes with jumps and anisotropic behavior is natural and reasonable since such objects arise in several natural and financial models. 
In certain circumstances L\'evy processes with jumps are more suitable to capture empirical facts that diffusion models do. See for instance \cite{CTF} for examples of financial models with jumps. 

In the nineteen fifties, De Giorgi \cite{DEGIORGI} and Nash \cite{NASH} independently prove an a-priori H\"older estimate for weak solutions $u$ to second order equations of the form
\[ \dvg(A(x)\nabla u(x))=0 \]
for uniformly elliptic and measurable coefficients $A$. 
In \cite{MOSER}, Moser proves H\"older continuity of weak solutions and gives a proof of an elliptic Harnack inequality for weak solutions
to this equation. This article provides a new technique of how to derive an a-priori H\"older estimate from the Harnack inequality. For a large class of local operators, the 
H\"older continuity can be derived from the Harnack inequality, see for instance \cite{GILBARG}.
For a comprehensive introduction into Harnack inequalities, we refer the reader e.g. to \cite{INTROHAR}.

The corresponding case of operators in non-divergence form is treated in by Krylov and Safonov in \cite{KSA}. The authors develop a technique for proving H\"older regularity and the Harnack inequality for harmonic functions 
corresponding to non-divergence form elliptic operators. They take a probabilistic point of view and make use of the martingale problem to prove regularity estimates for harmonic functions. 
The main tool is a support theorem, which gives information about the topological support for solutions to the martingale problem associated to the corresponding operator. 
This technique is also used in \cite{BLH} to prove similar results for nonlocal operators of the form 
\begin{equation}\label{eq:integrodiffa}
Lf(x)=\int_{\R^d\setminus\{0\}}[f(x+h)-f(x)-\mathds{1}_{\{|h|\leq1\}}h\cdot \nabla f(x)]a(x,h)dh
\end{equation}
under suitable assumptions on the function $a$. In \cite{BCR} Bass and Chen follow the same ideas to prove H\"older
regularity for harmonic functions associated to solutions of systems of stochastic differential equations driven by L\'{e}vy processes with highly singular L\'{e}vy measures.
In this work we extend the results obtained by Bass and Chen to a larger class of driving L\'{e}vy processes.

A one-dimensional L\'{e}vy process $(Y_t)_{t\geq 0}$ is called \textit{symmetric stable processes of order} $\mathit{\gamma \in (0,2)}$ if its
characteristic function is given by
\[ \mathbb{E}e^{i\xi Y_t} = e^{-t|\xi|^\gamma}, \qquad \xi\in\R.\]
The L\'{e}vy measure of such a process is given by $\nu(dh)=c_{\gamma}|h|^{-1-\gamma}\, dh,$ where $c_{\gamma}=2^{\gamma}\Gamma\left(\frac{1+\gamma}{2}\right)/\left|\Gamma\left(-\frac{\gamma}{2}\right)\right|$.

Let $d\in\N$  and $d\geq 2$. We assume that $Z_{t}^{i}, i=1,\dots,d$, are independent one-dimensional symmetric stable processes of order $\alpha_{i} \in (0,2)$
and define $Z=(Z_t)_{t\geq0}=(Z_t^{1},\dots,Z_t^{d})_{t\geq 0}$.\\ 
The L\'{e}vy-measure of this process is supported on the coordinate axes and is given by
\[  \nu(dw)= \sum_{k=1}^{d} \left( \frac{c_{\alpha_k}}{|w_k|^{1+\alpha_k}}dw_k \left(\prod_{j\neq k}\delta_{\{0\}}(dw_j) \right)\right).\]
Therefore $\nu(A)=0$ for every set $A\subset \R^d$, which has an empty intersection with the coordinate axes.
The generator $L$ of $Z$ is given for $f\in C_b^2(\R^d)$ by the formula
\begin{equation}\label{eq:generator}
 Lf(x)=\sum_{k=1}^{d} \int_{\R\setminus \{0\}} (f(x+he_{k})-f(x)-\mathds{1}_{\{|h|\leq 1\}}\partial_kf(x)h)\frac{c_{{\alpha_k}}}{|h|^{1+\alpha_k}} dh. 
 \end{equation}
For a deeper discussion on L\'{e}vy processes and their generators we refer the reader to \cite{SAT}.

Let $x_0\in\R^d$ and $A:\R^d\to\R^{d\times d}$ a matrix-valued function. We consider the system of stochastic differential equations
\begin{equation}\label{system of equations}
\left\{\begin{aligned}
  dX^i_{t} &= \sum_{j=1}^d A_{ij}(X_{t-})dZ_{t}^j, \\
   X_0^i &= x_0^i,
\end{aligned} \right.
\end{equation}
where $X_{t-}=\lim\limits_{s\nearrow t} X_s$ is the left hand limit.\\
  This system has been studied systematically in the case $\alpha_1=\alpha_2=\cdots=\alpha_d=\alpha\in(0,2)$ by Bass and Chen in the articles \cite{BCS} and \cite{BCR}. 
 With the help of the martingale problem, Bass and Chen prove in \cite{BCS} that for each $x_0\in\R^d$
 there exists a unique weak solution $(X=(X_t^1,\dots,X_t^d)_{t\geq 0}, \mathbb{P}^{x_0})$ to \eqref{system of equations}. 
 Furthermore the authors prove that the family $\{X,\mathbb{P}^x,x\in\R^d\}$ forms a conservative strong Markov process on $\R^d$ whose semigroup
 maps bounded continuous functions to bounded continuous functions (see Theorem 1.1, \cite{BCS}). 
 Consequently it follows that
\[  \mathcal{L}f(x)=\sum_{j=1}^{d} \int_{\R\setminus \{0\}} (f(x+a_{j}(x)h)-f(x)-h\mathds{1}_{\{|h|\leq 1\}}\nabla f(x) \cdot a_j(x))\frac{c_{\alpha}}{|h|^{1+\alpha}} dh \]
  coincides on $C_b^2(\R^d)$ with the generator for any weak solution to \eqref{system of equations},  where $a_j(x)$ denotes the j$^{th}$ column of the matrix $A(x)$.
In \cite{BCR} the authors prove H\"older regularity of harmonic functions with respect to $\mathcal{L}$ and give a counter example which shows that the Harnack inequality for harmonic functions is not satisfied.

 In this paper we do not study unique solvability of \eqref{system of equations} but prove an a-priori regularity estimate for harmonic functions if unique solutions to the system exist. The following assumptions will be needed throughout the paper.
  \begin{assumption}{\ } \vspace*{-1.5em}
  \begin{enumerate}
   \item[(i)] For every $x\in\R^d$ the matrix $A(x)$ is non-degenerate, that is $\det (A(x))\neq 0$.
   \item[(ii)] The functions $x\mapsto A_{ij}(x)$ and $x\mapsto A^{-1}_{ij}(x)$ are continuous and bounded for all $1\leq i,j\leq d$ and $x\in\R^d$.  
   \item[(iii)] \label{martingale-problem} For any $x_0\in\R^d$, there exists a unique solution to the martingale problem for 
\begin{equation}\label{gen:op}   
\mathcal{L}f(x)=\sum_{j=1}^{d} \int_{\R\setminus \{0\}} (f(x+a_{j}(x)h)-f(x)-h\mathds{1}_{\{|h|\leq 1\}}\nabla f(x) \cdot a_j(x))\frac{c_{\alpha_j}}{|h|^{1+\alpha_j}} dh
\end{equation}
started at $x_0$. The operator $\mathcal{L}$ coincides on $C_b^2(\R^d)$ with the generator for the weak solution to \eqref{system of equations}.
  \end{enumerate}
  \end{assumption}
 For a comprehensive introduction into the martingale problem we refer the reader to \cite{ETHIER}. 
\subsection*{Notation} Let $A$ be the matrix-valued function from \eqref{system of equations}. Let $D$ be a Borel set. 
Throughout the paper $\m(D)$ denotes the modulus of continuity of $A$ and we write $\Lambda(D)$ for the upper bound of $A$ on $D$. 
We set $\alpha_{\min}:=\min\{\alpha_1,\dots,\alpha_d\}$ and $\alpha_{\max}:=\max\{\alpha_1,\dots,\alpha_d\}$.
For $i\in\N$ we write $c_i$ for positive constants and additionally $c_i=c_i(\cdot)$ if we want to highlight all the quantities the constant depends on.

In order to deal with the anisotropy of the process we consider a corresponding scale of cubes.
\begin{definition}\label{rectangles}
 Let $r\in(0,1]$ and $\alpha_1,\dots,\alpha_d\in(0,2)$.
 For $k>0$, we define
 \[ M^{k}_r(x):= \bigtimes_{i=1}^d \left(x_i-(kr^{\alpha_{\max}/\alpha_i}),x_i+(kr^{\alpha_{\max}/\alpha_i})\right). \]
For brevity we write $M_r(x)$ instead of $M^{1}_r(x)$.
 \end{definition}
Note that $M_r^k$ is increasing in $k$ and $r$. 
For $z\in\R^d$ and $r\in(0,1]$, the set $M_r(z)$ is a ball with radius $r$ and center $z$ in the metric space $(\R^d,d)$, where 
\[ d(x,y)=\sup_{k\in\{1,\dots,d\}} \{|x_k-y_k|^{\alpha_k/\alpha_{\max}}\mathds{1}_{\{|x_k-y_k|\leq 1\}}(x,y) + \mathds{1}_{\{|x_k-y_k|> 1\}}(x,y)\}. \]
This metric is useful for local considerations only, that is studies of balls with radii less or equal than one.
The advantage of using these sets is the fact that they reflect the different jump intensities of the process $Z$
and compensate them in an appropriate way, see for instance \autoref{exittime}.

The purpose of this paper is to prove the following result.
\begin{theorem}\label{hoelder-reg}
 Let $r\in(0,1]$, $s>0$ and $x_0\in\R^d.$ Suppose $h$ is bounded in $\R^d$ and harmonic in $M_r^{1+s}(x_0)$ 
 with respect to $X$.  
 Then there exist $c_1=c_1(\Lambda(M_r^{1+s}(x_0)),\m(M_r^{1+s}(x_0)))>0$ and $\beta=\beta(\Lambda(M_r^{1+s}(x_0)),\m(M_r^{1+s}(x_0)))>0$,
 independent of $h$ and $r$, such that
 \[ |h(x)-h(y)|\leq c_1 \left( \frac{|x-y|}{r^{\alpha_{\max}/\alpha_{\min}}} \right)^{\beta} \sup\limits_{\R^d} |h(z)|  \quad \text{ for } x,y\in M_r(x_0).\]
 \end{theorem}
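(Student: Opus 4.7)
My approach is the probabilistic Krylov--Safonov / Bass--Chen scheme of \cite{BCR}, transported to the anisotropic family of rectangles $M_r^k(x_0)$ introduced in \autoref{rectangles}. The central step will be to prove an oscillation reduction of the form
\[
    \osc_{M_{\rho r'}(x_0)} h \;\leq\; \theta\,\osc_{M_{r'}(x_0)} h \;+\; (1-\theta)\sup_{\R^d}|h|
\]
for every $r'\in(0,r]$, with constants $\rho,\theta\in(0,1)$ depending only on $\Lambda(M_r^{1+s}(x_0))$ and $\m(M_r^{1+s}(x_0))$. Iterating along the geometric scale $r_k=\rho^k r$ then yields $\osc_{M_{r_k}(x_0)}h\leq c_2\theta^k\sup_{\R^d}|h|$, and given $x,y\in M_r(x_0)$ I would pick the largest $k$ for which $y\in M_{r_k}(x)$; since the longest side of $M_{r_k}(x_0)$ has length of order $r_k^{\alpha_{\max}/\alpha_{\min}}$, this converts $\theta^k$ into a factor of the form $\bigl(|x-y|/r^{\alpha_{\max}/\alpha_{\min}}\bigr)^\beta$ with $\beta=-\log\theta/(\log(1/\rho)\cdot\alpha_{\max}/\alpha_{\min})$, matching the claimed estimate.

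\textbf{Key estimates.} The oscillation reduction rests on two ingredients. The first is the two-sided exit-time bound $c_3 r^{\alpha_{\max}}\leq \mathbb{E}^x[\tau_{M_r(x)}]\leq c_4 r^{\alpha_{\max}}$, uniform for $x\in M_r^{1+s}(x_0)$; this is precisely the motivation for using the anisotropic rectangles and should be available as \autoref{exittime}. The second is a hitting probability estimate: there exists $\eta>0$ such that whenever $F\subset M_{r'}(x_0)$ is Borel with sufficiently large anisotropic Lebesgue measure,
\[
    \mathbb{P}^x\bigl(T_F<\tau_{M^{1+s}_{r'}(x_0)}\bigr)\;\geq\;\eta\qquad\text{for every } x\in M_{\rho r'}(x_0).
\]
With these in hand, the oscillation reduction follows by the classical dichotomy: set $m=\inf_{M_{r'}}h$ and $M=\sup_{M_{r'}}h$; whichever of $\{h\leq(M+m)/2\}$ or $\{h>(M+m)/2\}$ has large anisotropic measure in $M_{r'}(x_0)$ is hit with probability at least $\eta$ before exit of $M^{1+s}_{r'}(x_0)$, and the martingale property of $h(X_{t\wedge\tau})$ — combined with control of jumps that land outside $M_{r'}^{1+s}(x_0)$ by $\sup_{\R^d}|h|$ — delivers the displayed contraction.

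\textbf{Main obstacle.} The delicate point is the hitting estimate. The driving L\'evy measure of $Z$ is concentrated on $d$ lines, so from $X_{t-}$ the process can jump only in the $d$ directions $a_1(X_{t-}),\dots,a_d(X_{t-})$, and no elementary jump-kernel lower bound on arbitrary balls is available. I would handle this via a support theorem for \eqref{system of equations}: for any piecewise-linear path $\phi$ with $\phi(0)=x$ and tangent vectors successively parallel to the columns of $A(\phi(s))$, one has $\mathbb{P}^x(\sup_{t\leq T}|X_t-\phi(t)|<\delta)>0$. By Assumptions~(i)--(ii), any point of $M_{r'/2}(x_0)$ can be reached from any $x\in M_{\rho r'}(x_0)$ by chaining at most $d$ such segments of uniformly controlled length, and a covering argument based on the support theorem plus the exit-time estimate and the strong Markov property upgrades this qualitative statement into the quantitative lower bound on $\mathbb{P}^x(T_F<\tau_{M^{1+s}_{r'}(x_0)})$. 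Establishing the support theorem in this singular variable-coefficient setting, with constants depending only on $\Lambda$ and $\m$ and uniform in the scale, is the principal technical hurdle; it is also where Assumption~(iii) on unique solvability of the martingale problem is essential.
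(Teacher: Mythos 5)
Your plan correctly identifies the high-level architecture of the paper's argument — a Krylov--Safonov support theorem (Lemmas~\ref{Lemma1}--\ref{support2}, \autoref{support-theorem}, \autoref{supportphi}) produces the hitting estimate, which is combined with the martingale property of $h(X_{t\wedge\tau})$ and an oscillation iteration on the anisotropic rectangles $M_{\rho^k}(x_0)$. The ``main obstacle'' you flag is indeed the one the paper spends Section~3 resolving, and your sketch of chaining column-direction jumps via a support theorem is the right idea.

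However, the displayed oscillation reduction in your plan is not one you can iterate. From
\[
\osc_{M_{\rho r'}(x_0)} h \;\leq\; \theta\,\osc_{M_{r'}(x_0)} h \;+\; (1-\theta)\sup_{\R^d}|h|
\]
with fixed $\theta\in(0,1)$, one gets $\osc_{M_{r_k}(x_0)} h \leq \theta^k \osc_{M_{r}(x_0)} h + (1-\theta^k)\sup_{\R^d}|h|$, whose right-hand side converges to $\sup_{\R^d}|h|$, not to $0$; your claimed conclusion $\osc_{M_{r_k}}h\leq c_2\theta^k\sup_{\R^d}|h|$ does not follow. The crude ``control of jumps that land outside $M_{r'}^{1+s}$ by $\sup_{\R^d}|h|$'' loses the game precisely because the process is nonlocal: at every scale there is a fixed positive chance of a big jump, so its contribution must itself be shown to be geometrically small in $k$, not merely bounded. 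The paper does this by strengthening the induction hypothesis to hold at \emph{all} previous scales simultaneously, $\osc_{M_i}h\leq M\gamma^i$ for all $i\leq k$, and by invoking \autoref{enter-exit} to bound $\mathbb{P}^y(X_{\tau_k}\notin M_{k-i})\leq c_4(\rho^k/\rho^{k-i})^{\alpha_{\max}}$. Splitting the exit event into the shells $M_{k-i-1}\setminus M_{k-i}$ and using the induction hypothesis at scale $k-i-1$ on each shell gives a tail term
\[
\sum_{i\geq 1} M\gamma^{k-i-1}\,c_4\,\rho^{i\alpha_{\max}}
\;\leq\; \tfrac{c_3}{2}\,M\gamma^{k},
\]
after choosing $\rho$ small enough that $\rho^{\alpha_{\max}}/\gamma\leq 1/2$. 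Only then does the three-term decomposition (hit $A$; exit into the adjacent shell $M_{k-1}$; exit further out) close to $M\gamma^{k+1}$. This interplay between the enter/exit probability estimate, the choice of $\rho$ relative to $\gamma=1-c_3$, and the induction over all prior scales is the missing ingredient in your proposal. (Two minor points: the paper proves only the upper exit-time bound in \autoref{exittime} — the lower bound you posit is not needed — and the hitting estimate is stated for a fixed shrinking factor in \autoref{supportphi}, rather than for ``sufficiently large measure'' at arbitrary scale.)
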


We want to emphasize, that in the case $\alpha_1=\cdots=\alpha_d$ the set $M_r(x_0)$ reduces to a cube with radius $r$
and hence this result coincides with \cite[Theorem 2.9]{BCR}, when one chooses cubes instead of balls.

Let us briefly discuss selected related results in the literature.

As previously mentioned, in \cite{BLH} the authors study operators of the form \eqref{eq:integrodiffa} for coefficients $a:\R^d\times\R^d\to\R$ which are assumed to be symmetric in the second variable and satisfy $a(x,h)\asymp |h|^{-d-\alpha}$ for all $x,h\in\R^d$, where $\alpha\in(0,2)$. 
Using probabilistic techniques they prove a Harnack inequality and derive H\"older regularity estimates for bounded harmonic functions.
The results of this work have been extended to more general kernels by several authors. 
For instance, in \cite{BASKAS1} the authors establish a H\"older estimate for harmonic functions to operators of the form \eqref{eq:integrodiffa}, where they replace the jump measure $a(x,h)\, \d h$ by a family of measures $n(x,\d h)$, which is not required to have a density with respect to the Lebesgue meaure. 
Furthermore, \cite{SONG04} extends the method of \cite{BLH} to prove the Harnack inequality for more general classes of Markov processes.
In \cite{Bo17} the authors construct and study the heat kernel a class of highly anisotropic integro-differential operators, where the L\'{e}vy measure does not have to be absolutely continuous with respect to the Lebesgue measure.

This article studies regularity for operators in non-divergence form given by \eqref{gen:op}. 
H\"{o}lder regularity results have intensively been studied for linear and nonlinear nonlocal equations governed by operators in non-divergence form. 
\cite{SILVESTRE} provides a purely analytic proof of H\"older continuity for harmonic functions with respect to a class of integro differential equations given by \eqref{eq:integrodiffa}, where no symmetry on the kernel $a$ is assumed.
In \cite{CAFFSIL}, the authors study viscosity solutions to fully nonlinear integro-differential equations and prove a nonlocal version of the 
Aleksandrov-Bakelman-Pucci estimate, a Harnack inequality and a Hölder estimate. 
There are many more important results concerning H\"older estimates and Harnack inequalities for integro-differential equations
in non-divergence form including \cite{CafSilExt}, \cite{Imbert11}, \cite{Hector14}, \cite{CLU14}, \cite{Rang14} and \cite{ZHANG15}. 
H\"{o}lder regularity estimates have also been intensely studied for operators in divergence form. 
We would like to mention two works, where the corresponding jump intensities are similar to the ones we study in this article. 
In \cite{ChKa20} and \cite{chakawe19} the authors study nonlocal elliptic resp. parabolic equations for families of operators which can be of the form \eqref{eq:generator}. They prove a weak Harnack inequality and Hölder regularity estimates for weak solutions to the corresponding equations.

Let us give a short survey to known results related to systems of stochastic differential equations given by \eqref{system of equations}. 
We first discuss some results in the case $\alpha_1=\dots= \alpha_d$. In \cite{BCS} the authors prove unique weak solvability for \eqref{system of equations}. 
\cite{BCR} shows H\"{o}lder regularity estimates for bounded harmonic functions. 
Furthermore, in \cite{KRS18} the authors prove the strong Feller property for the corresponding semigroup for \eqref{system of equations}. 
Sharp lower bounds for the transition densities for the process $Z_t=(Z_t^1,\dots,Z_t^d)$ are studied in \cite{Xu13} and sharp upper bounds in \cite{KKK19}. \\
The existence of a unique solution to the martingale problem for \eqref{system of equations} in the case of different orders of differentiability, i.e. $\alpha_i\neq \alpha_j$ for $i\neq j$, is shown in \cite{ChMZ19} under the additional assumption that the matrix $A$ is diagonal.
\cite{Kulc} also studies the system \eqref{system of equations} in the case of diagonal matrices $A$. The authors prove 
sharp two-sided estimates of the corresponding transition density $p^{A}(t,x,y)$ and prove H\"older and gradient estimates for the function $x\mapsto p^{A}(t,x,y)$.
In \cite{FPR18} the authors study the existence of densities for solutions \eqref{system of equations} with H\"{o}lder continuous coefficients. They allow for a wide class of L\'{e}vy processes including the anisotropic processes $Z_t$ with different orders of differentiability.
In \cite{kulczsemigroup} the authors study systems of the form \eqref{system of equations} where $Z_t^1,\dots,Z_t^d$ are independent one-dimensional L\'{e}vy processes with characteristic exponents $\psi_1,\dots,\psi_d$. Under scaling conditions and regularity properties on the characteristic function they prove semigroup  properties for solutions.
 
\subsection*{Structure of the article}
This article is organized as follows. In \autoref{sectiondef} we provide definitions and auxiliary results. We constitute sufficient preparation and
study the behavior of the solution to the system. 
In \autoref{sectionsupport} we study the topological support of the solution to the martingale problem associated to the system of stochastic differential equations. 
The aim of this section is to prove a support theorem.
\autoref{sectionproof} contains the proof of \autoref{hoelder-reg}.

\section{Definitions and auxiliary results}\label{sectiondef}
In this section we provide important definitions and prove auxiliary results associated to the solution of the system \eqref{system of equations}.

Let $A^\tau(x)$ denote the transpose of the matrix $A(x)$ and $(a_j^\tau(x))^{-1}$ the j$^{\text{th}}$ row of $(A^{\tau}(x))^{-1}$. 
For a Borel set $D$, we denote the first entrance time of the process $X$ in $D$ by $T_D:=\inf\{t\geq 0 \colon X_t\in D\}$ and the first exit time of $X$ of $D$ by $\tau_D:=\inf\{t\geq 0 \colon X_t\notin D\}$.

Let us first recall the definition of harmonicity with respect to a Markov process.
\begin{definition}
A bounded function $h:\R^d\to\R$ is called harmonic with respect to $X$ in a domain $D\subset\R^d$ if for every bounded open set $U$ with $U\Subset D$  
\[ h\left(X_{t\wedge\tau_U}\right) \text{ is a } \mathbb{P}^x \text{-martingale for every } x\in U. \]
\end{definition}
 For $R=M_s(y)$ we use the notation $\widehat{R}=M^3_s(y)$. The next Proposition is a pure geometrical statement and not related to the system of stochastic differential equations. We skip the proof and refer the reader to \cite[Proposition V.7.2]{BAD}, which can be easily adjusted to our case.
\begin{proposition}\label{constructD}
 Let $r\in(0,1],q\in(0,1)$ and $x_0\in\R^d$. If $A\subset M_r(x_0)$ and $|A|<q$, then there exists a set $D\subset M_r(x_0)$ such that
 \begin{enumerate}
  \item $D$ is the union of rectangles $\widehat{R_i}$ such that the interiors of the $R_i$ are pairwise disjoint,
  \item $|A|\leq |D\cap M_r(x_0)|$ and
  \item for each $i$, $|A\cap R_i|>q|R_i|.$
 \end{enumerate}
\end{proposition}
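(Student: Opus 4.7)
The plan is a Calder\'on--Zygmund stopping-time argument, adapted to the anisotropic rectangles $M_s(y)$. Concretely, I would fix a nested dyadic family of rectangles starting from $M_r(x_0)$ and recurse: at each step, select any descendant rectangle $R$ in which the relative density $|A\cap R|/|R|$ first exceeds $q$, and continue subdividing any descendant in which the density is still at most $q$. By the Lebesgue density theorem, almost every point of $A$ has density $1$ at all sufficiently small scales, so the selection terminates on almost all of $A$ and produces a countable family $\{R_i\}$ of rectangles with pairwise disjoint interiors such that $|A\cap R_i|>q|R_i|$ and whose union covers $A$ up to a null set. Setting $D := \bigcup_i \widehat{R_i}$ then yields (1) and (3) by construction, while (2) follows from
\begin{equation*}
|A| \;=\; |A \cap M_r(x_0)| \;\le\; \Bigl|\bigcup_i R_i\Bigr| \;=\; \sum_i |R_i| \;\le\; |D \cap M_r(x_0)|,
\end{equation*}
since each $R_i$ is contained in $\widehat{R_i} \subset D$ and also in $M_r(x_0)$.

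The main (and really only) obstacle is the construction of the nested subdivision itself, because the $M_s(y)$-rectangles do not nest cleanly: their Euclidean aspect ratios depend on $s$ via the exponents $\alpha_{\max}/\alpha_i$, which are in general irrational, so halving $s$ does not map a parent $M_s(y)$ onto a finite union of children of the form $M_{s/2}(z)$. The natural workaround is to perform the subdivision in a purely Euclidean dyadic sense (halving each side in turn), which does produce a genuine nesting, and then observe that every descendant rectangle is comparable, up to a bounded geometric factor depending only on $\alpha_1,\dots,\alpha_d$, to a rectangle of the form $M_s(y)$ at a matching scale. This bounded factor is absorbed into the dilation defining $\widehat{R}=M_s^3(y)$, which is precisely why the hat expansion uses a factor strictly larger than $1$; this is the sense in which the Euclidean argument of \cite{BAD} is ``easily adjusted'' to the present anisotropic setting.
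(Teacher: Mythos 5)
Your overall plan --- a Calder\'on--Zygmund stopping-time decomposition combined with Lebesgue density and a fixed dilation $\widehat{R}$ --- is the right one, and indeed is exactly what the paper outsources to \cite{BAD}; you also correctly single out the one real obstruction, namely that the anisotropic rectangles $M_s(y)$ do not nest dyadically. The gap is in the proposed workaround: it is not true that a generation-$n$ Euclidean-dyadic descendant of $M_r(x_0)$ is comparable to some $M_s(y)$ with a factor bounded uniformly in $n$. After $n$ halvings the side-length in direction $i$ is $2r^{\alpha_{\max}/\alpha_i}\,2^{-n}$, whereas $M_s(y)$ has side-length $2s^{\alpha_{\max}/\alpha_i}$; matching in direction $i$ forces $s\asymp r\,2^{-n\alpha_i/\alpha_{\max}}$, and the candidate values of $s$ across directions differ by a factor of order $2^{n(1-\alpha_{\min}/\alpha_{\max})}$, which blows up with $n$ unless all $\alpha_i$ coincide. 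So the mismatch is not a fixed geometric factor that can be absorbed into $\widehat{R}=M_s^3(y)$ (and incidentally that factor $3$ is there as the covering constant of the stopping-time argument, not as slack to absorb a shape discrepancy). This matters downstream: in the proof of \autoref{supportphi} the rectangles $R_i$ and their shrunk copies $R_i^*$ are fed into \autoref{support-corollary2} and into the definition of $\varphi$, both of which are stated only for rectangles of the exact form $M_s(y)$, so $R_i$ cannot be an arbitrary Euclidean box with drifting aspect ratio.

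A workable repair is to abandon the purely Euclidean halving. One option is an anisotropic subdivision with a generation-dependent number $m_i^{(n)}\in\{\lfloor\rho^{-\alpha_{\max}/\alpha_i}\rfloor,\,\lceil\rho^{-\alpha_{\max}/\alpha_i}\rceil\}$ of cuts in direction $i$ at step $n$, chosen so that the running products $\prod_{k<n}m_i^{(k)}$ stay within a bounded multiplicative factor of $\rho^{-n\alpha_{\max}/\alpha_i}$ simultaneously for all $i$; this keeps the generation-$n$ pieces uniformly comparable to $M_{\rho^n r}$-rectangles. Cleaner still is to note that, by the metric $d$ introduced after \autoref{rectangles}, the sets $M_s(y)$ with $s\le 1$ are precisely the $d$-balls, and $(\R^d,d,\text{Lebesgue})$ is a doubling metric measure space (doubling exponent $\sum_i\alpha_{\max}/\alpha_i$). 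One then replaces the dyadic stopping time by a Vitali-type covering argument in that metric space, using Lebesgue differentiation along $d$-balls to select, for a.e.\ $x\in A$, a maximal $s(x)$ with $M_{s(x)}(x)\subset M_r(x_0)$ and $|A\cap M_{s(x)}(x)|>q\,|M_{s(x)}(x)|$, and extracting a pairwise-disjoint subfamily whose dilates cover $A$. This produces stopping rectangles that are genuinely of the form $M_s(y)$, as the rest of the paper requires.
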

Following the ideas of the proof of \cite[Proposition 2.3]{BLH}, we next prove a  L\'{e}vy system type formula. 
\begin{proposition}\label{Levy-system}
 Suppose $D$ and $E$ are two Borel sets with $\dist(D,E)>0$.
 Then
 \begin{equation*}
 \sum_{s\leq t} \mathds{1}_{\{X_{s-}\in D, X_s\in E\}} -\int_{0}^t \mathds{1}_D (X_s) \int _E \sum_{k=1}^d \left(\frac{|(a_k^\tau(X_s))^{-1}|^{1+\alpha_k}c_{\alpha_k}}{|h_k - X_s^{k}|^{1+\alpha_k}}dh_k\left(\prod_{j\neq k}\delta_{\{X_s^{j}\}}(dh_j) \right)\right) ds
 \end{equation*}
is a $\mathbb{P}^x$-martingale for each $x$.
 \end{proposition}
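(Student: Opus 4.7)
The plan is to follow the classical strategy for establishing a L\'evy system identity, adapted to the martingale problem for $\mathcal{L}$ in the spirit of \cite[Proposition~2.3]{BLH}. The principle is to compare the purely jump-theoretic counting $\sum_{s\le t}\ind_{\{X_{s-}\in D,\,X_s\in E\}}$ with the compensator arising from the Dynkin martingale of a suitable test function approximating $\ind_E$.

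Concretely, I would first fix $\delta\in(0,\dist(D,E)/3)$ and pick $f\in C_b^2(\R^d)$ with $f\equiv 1$ on $E$, $f\equiv 0$ outside the $\delta$-neighbourhood of $E$, so that in particular $f$ and $\nabla f$ vanish on the whole $\delta$-neighbourhood of $D$. By Assumption~(iii), the Dynkin process
$$M^f_t := f(X_t)-f(X_0)-\int_0^t \mathcal{L}f(X_s)\,ds$$
is a $\PP^x$-martingale. Since $X$ is a pure-jump process driven by $Z$, the path decomposition $f(X_t)-f(X_0)=\sum_{s\le t}(f(X_s)-f(X_{s-}))$ restricted to $\{X_{s-}\in D\}$ simplifies, because $f(X_{s-})=0$ in that event. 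Approximating $\ind_E$ by an increasing sequence of such $f_n$ and letting $n\to\infty$ identifies $\sum_{s\le t}\ind_{\{X_{s-}\in D, X_s\in E\}}$ as the limit of the jump sums of $f_n(X_\cdot)$.

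The compensator side $\int_0^t \mathcal{L}f(X_s)\,ds$ is then computed by inserting the explicit formula for $\mathcal{L}$ from Assumption~(iii). On $\{X_s\in D\}$, the correction term $-h\,\ind_{\{|h|\le 1\}}\nabla f(X_s)\cdot a_j(X_s)$ vanishes since $\nabla f\equiv 0$ there, and one is left with
$$\mathcal{L}f(X_s)=\sum_{k=1}^d\int_{\R\setminus\{0\}}f\bigl(X_s+a_k(X_s)h\bigr)\,\frac{c_{\alpha_k}}{|h|^{1+\alpha_k}}\,dh.$$
A one-dimensional change of variables along each line $\{X_s+a_k(X_s)h : h\in\R\}$, using the linear algebraic relationship $h=(a_k^\tau(X_s))^{-1}(y-X_s)$ induced by the non-degeneracy of $A(X_s)$, recasts each integral as the integral of $f$ against the kernel stated in the proposition; the factor $|(a_k^\tau(X_s))^{-1}|^{1+\alpha_k}$ appears as the Jacobian of this rewriting. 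Combining the jump decomposition with this compensator expression and invoking optional sampling gives the martingale property.

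The main obstacle is the change-of-variables step: one must keep careful track of how the one-parameter L\'evy intensity of the driver $Z^k$ is pushed forward along the jump direction of $X$ at the current state, and of the corresponding Jacobian factor expressed via $(a_k^\tau(X_s))^{-1}$. A secondary technical point is the passage from the smooth $f_n$ to $\ind_E$: one applies a monotone class / dominated convergence argument which is justified precisely because $\dist(D,E)>0$ prevents accumulation of jumps in the transition region between the support of $\ind_D$ and that of $\ind_E$, and because the integrability of the stated kernel against compactly supported test functions is automatic away from the diagonal.
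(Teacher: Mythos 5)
Your proposal follows essentially the same route as the paper's proof, which itself adapts \cite[Proposition~2.3]{BLH}: take a Dynkin martingale $M^f_t=f(X_t)-f(X_0)-\int_0^t\mathcal{L}f(X_s)\,ds$ for a test function $f$ that vanishes (together with $\nabla f$) on $D$ and equals $1$ on $E$, localize to $\{X_{s-}\in D\}$, and then rewrite $\mathcal{L}f$ on $D$ via a one-dimensional change of variables along the jump ray, which produces the Jacobian factor $|(a_k^\tau(X_s))^{-1}|^{1+\alpha_k}$.

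One step you describe only loosely is the localization: you say the path decomposition ``restricted to $\{X_{s-}\in D\}$ simplifies,'' but you do not explain why the restricted object is still a martingale. The paper does this cleanly by observing that $\int_0^t\mathds{1}_D(X_{s-})\,dM_s^f$ is a stochastic integral of a predictable, bounded integrand against a martingale, hence again a $\mathbb{P}^x$-martingale; from there the rewriting into a jump sum minus a compensator follows, and the replacement of $X_{s-}$ by $X_s$ in the Lebesgue integral uses that they differ for only countably many $s$. Also, ``optional sampling'' is not the mechanism that delivers the martingale property here, so that phrase should be replaced by the stochastic-integral argument. On the other hand, your explicit approximation $f_n\uparrow\mathds{1}_E$ together with a monotone-convergence passage is a point of added care; the paper compresses it by choosing a single $f$ that is $0$ on $D$ and $1$ on $E$ and noting that, since $\dist(D,E)>0$, the jump sum involves only finitely many terms so the identification with $\sum_{s\le t}\mathds{1}_{\{X_{s-}\in D,\,X_s\in E\}}$ is immediate.
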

 \begin{proof}
 Let $f\in C_b^2(\R^d)$ with $f=0$ on $D$ and $f=1$ on $E$. Moreover set 
 \[M_{t}^f:= f(X_t) -f(X_0)-\int_{0}^{t} \mathcal{L}f(X_s) ds.\]
By Assumption (iii) for each $x\in\R^d$ the probability measure $\mathbb{P}^x$ is a solution to the martingale problem for $\mathcal{L}$. 
Since the stochastic integral with respect to a martingale is itself a martingale,
\[ \int_{0}^t \mathds{1}_D (X_{s-}) dM_s^f \]
is a $\mathbb{P}^x$-martingale.
Rewriting $ f(X_t)-f(X_0) = \sum_{s\leq t} (f(X_s)-f(X_{s-}))$ leads to 
\[ \sum_{s\leq t} \left( \mathds{1}_D (X_{s-}) (f(X_s)-f(X_{s-})) \right) - \int_{0}^t \mathds{1}_D (X_{s-}) \mathcal{L} f(X_s) ds\]
is a $\mathbb{P}^x$-martingale. Since $X_s\neq X_{s-}$ for only countably many values of $s$,
\begin{equation}\label{summartingale}
\sum_{s\leq t} \left( \mathds{1}_D(X_{s-}) (f(X_s)-f(X_{s-})) \right) - \int_{0}^{t} \mathds{1}_D (X_s) \mathcal{L}f(X_s) ds 
\end{equation}
is also a $\mathbb{P}^x$-martingale.
Let $w=(w_1,\dots,w_d)$ and $u=(u_1,\dots,u_d)$. By definition of $f$, for $x\in D$ we have $f(x)=0$ and $\nabla f(x)=0$. Hence
\begin{align*}
 \mathcal{L}f(x) & = \sum_{k=1}^{d} \int_{\R\setminus \{0\}} f(x+a_k(x)h) \frac{c_{\alpha_k}}{|h|^{1+\alpha_k}} dh \\
 & =\sum_{k=1}^d \int_{\R^d\setminus\{0\}} \left( f(x+A^{\tau}(x)w) \frac{c_{\alpha_k}}{|w|^{1+\alpha_k}}\left( \prod_{j\neq k} \delta_{\{0\}}(dw_j) \right) \right) dw_k \\ 
 & = \sum_{k=1}^d \int_{\R^d\setminus \{0\}} f(u) \frac{|(a_k^\tau(x))^{-1}|^{1+\alpha_k}c_{\alpha_k}}{|u-x|^{1+\alpha_k}}\left( \prod_{j\neq k} \delta_{\{x_j\}}(du_j) \right) du_k. \\
\end{align*}
Note, that $c_{\alpha_k}/|h|^{1+\alpha_k}$ is integrable over $h$ in the complement of any neighborhood of the origin for any $k\in\{1,\dots,d\}$. Since $D$ and $E$ have
a positive distance from each other, the sum in \eqref{summartingale} is finite. Hence
\begin{align*}
 \sum_{s\leq t}&\left( \mathds{1}_D (X_{s-}) (\mathds{1}_E (X_s)-\mathds{1}_E (X_s)) \right)\\
 &\qquad -  \int_{0}^t \mathds{1}_D (X_s) \int _E \sum_{k=1}^d \left(\frac{|(a_j^{\tau}(X_s))^{-1}|^{1+\alpha_j}c_{\alpha_k}}{|h_k - X_s^{k}|^{1+\alpha_k}}dh_k\left(\prod_{j\neq k}\delta_{\{X_s^{j}\}}(dh_j) \right)\right)ds 
\end{align*}
is a $\mathbb{P}^x$-martingale, which is equivalent to our assertion.
 \end{proof}
The next Proposition gives the behavior of the expected first exit time of the solution to \eqref{system of equations} out of the set $M_r(\cdot)$. 
This Proposition highlights the advantage of $M_r(\cdot)$ and shows that the scaling of the cube in the different 
directions with respect to the jump intensity compensates the different jump intensities in the different directions.
\begin{proposition}\label{exittime}
Let $x\in\R^d$ and $r\in(0,1].$ Then there exists a constant $c_1=c_1(\Lambda(M_r(x)),d)>0$ such that for all $z\in M_r(x)$
 \[ \mathbb{E}^{z}\left[ \tau_{M_r(x)}\right]\leq c_1 r^{\alpha_{\max}}. \]
 \end{proposition}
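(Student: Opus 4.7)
My plan is to bound $\mathbb{E}^z[\tau_{M_r(x)}]$ via Dynkin's formula applied to a carefully chosen test function. I aim to construct $f \in C_b^2(\R^d)$ such that (a) $f \equiv 0$ and $\nabla f \equiv 0$ on $M_r(x)$, (b) $\|f\|_\infty \leq c\, r^{\alpha_{\max}}$, and (c) $\mathcal{L}f \leq -1$ on $M_r(x)$. Using Assumption (iii), the process $f(X_t) - f(X_0) - \int_0^t \mathcal{L}f(X_s)\,ds$ is a $\mathbb{P}^z$-martingale, and optional stopping at $\tau_{M_r(x)} \wedge t$ will give $\mathbb{E}^z[\tau_{M_r(x)} \wedge t] \leq \|f\|_\infty$; monotone convergence as $t\to\infty$ then yields the claim.

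\textbf{Test function and the reduction.} Fix a smooth cut-off $\chi\colon\R\to[0,1]$ with $\chi\equiv 1$ on $[-1,1]$ and $\supp\chi\subset(-2,2)$. For $i=1,\dots,d$ set $\psi_i(s):=1-\chi\bigl(s/r^{\alpha_{\max}/\alpha_i}\bigr)$, so that $\psi_i\geq 0$, $\psi_i\equiv 0$ on $[-r^{\alpha_{\max}/\alpha_i},r^{\alpha_{\max}/\alpha_i}]$ and $\psi_i\geq \tfrac12$ on $\R\setminus(-2r^{\alpha_{\max}/\alpha_i},2r^{\alpha_{\max}/\alpha_i})$. Define
\[
 f(y) := -C\, r^{\alpha_{\max}} \sum_{i=1}^d \psi_i(y_i-x_i),
\]
with $C>0$ to be fixed below. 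Then $f\in C_b^2(\R^d)$, $f\leq 0$, $\|f\|_\infty\leq Cd\,r^{\alpha_{\max}}$, and $f,\nabla f$ vanish identically on $M_r(x)$. Choose an index $i^*$ with $\alpha_{i^*}=\alpha_{\max}$. For $y\in M_r(x)$, dropping the first two terms in the generator formula gives
\[
 \mathcal{L}f(y) = -C\,r^{\alpha_{\max}}\sum_{i,k=1}^d \int_{\R} \psi_i\bigl(y_i-x_i+A_{ik}(y)h\bigr)\, \frac{c_{\alpha_k}}{|h|^{1+\alpha_k}}\,dh \;\leq\; -C\,r^{\alpha_{\max}}\,I_{i^*}(y),
\]
where $I_{i^*}(y)$ denotes the $i=i^*$ summand.

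\textbf{Main obstacle and conclusion.} The crux is a uniform lower bound $I_{i^*}(y)\geq c_3 r^{-\alpha_{\max}}$ on $M_r(x)$. For any $k$ with $A_{i^*k}(y)\neq 0$, since $|y_{i^*}-x_{i^*}|<r^{\alpha_{\max}/\alpha_{i^*}}=r$, the set $\{h:|h|\geq 3r/|A_{i^*k}(y)|\}$ is contained in $\{\psi_{i^*}(y_{i^*}-x_{i^*}+A_{i^*k}(y)h)\geq \tfrac12\}$, producing a contribution of at least $c'|A_{i^*k}(y)|^{\alpha_k}r^{-\alpha_k}\geq c'|A_{i^*k}(y)|^{\alpha_k}r^{-\alpha_{\max}}$ (using $r\leq 1$ and $\alpha_k\leq\alpha_{\max}$). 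Here Assumptions (i)--(ii) are crucial: invertibility of $A(y)$ together with the bound on $A^{-1}$ forces $|A^\tau(y)e_{i^*}|\geq 1/\Lambda(M_r(x))$, so some entry satisfies $|A_{i^*k^*}(y)|\geq 1/(\sqrt d\,\Lambda(M_r(x)))$; combined with the upper bound on $A$, the quantity $|A_{i^*k^*}(y)|^{\alpha_{k^*}}$ is bounded below by a constant depending only on $\Lambda(M_r(x))$ and $d$. Choosing $C=1/c_3$ gives $\mathcal{L}f\leq -1$ on $M_r(x)$. Since $f\leq 0$, $f(z)=0$ for $z\in M_r(x)$, and $X_s\in M_r(x)$ for $s<\tau_{M_r(x)}$, Dynkin's formula yields
\[
 -\|f\|_\infty \;\leq\; \mathbb{E}^z\bigl[f(X_{t\wedge\tau_{M_r(x)}})\bigr] \;=\; \mathbb{E}^z\!\!\int_0^{t\wedge\tau_{M_r(x)}}\!\! \mathcal{L}f(X_s)\,ds \;\leq\; -\mathbb{E}^z\bigl[t\wedge\tau_{M_r(x)}\bigr],
\]
so $\mathbb{E}^z[t\wedge\tau_{M_r(x)}]\leq \|f\|_\infty \leq c_1 r^{\alpha_{\max}}$, and monotone convergence finishes the proof. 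The only delicate point is keeping the constant $c_3$ independent of $y\in M_r(x)$, which rests entirely on the uniform non-degeneracy of $A$.
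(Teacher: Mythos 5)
Your approach—build a barrier $f$ with $\mathcal{L}f\leq-1$ on $M_r(x)$ and $\|f\|_\infty\lesssim r^{\alpha_{\max}}$, then invoke Dynkin and optional stopping—is a genuinely different route from the paper's. The paper argues probabilistically: it reduces to one coordinate at a time, observes that on each unit time interval $Z^j$ makes a jump of size exceeding $3/\kappa$ with probability $\geq c_3$ (forcing an exit), and iterates via the Markov property to get $\mathbb{P}^z(\Upsilon_j>m)\leq(1-c_3)^m$ and hence a finite expected exit time by a geometric series. Your strategy is in principle at least as clean, and the Dynkin/optional-stopping bookkeeping at the end is fine, but there is a genuine gap in the lower bound on $I_{i^*}(y)$.

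The inequality $r^{-\alpha_k}\geq r^{-\alpha_{\max}}$ that you invoke ``using $r\leq 1$ and $\alpha_k\leq\alpha_{\max}$'' runs the wrong way: for $r\in(0,1)$ we have $1/r\geq 1$, so $(1/r)^{\alpha_k}\leq(1/r)^{\alpha_{\max}}$, i.e.\ $r^{-\alpha_k}\leq r^{-\alpha_{\max}}$. Consequently the column $k^*$ that your pigeon-hole on the row $A^\tau(y)e_{i^*}$ produces gives only $I_{i^*}(y)\gtrsim r^{-\alpha_{k^*}}$, which can be much smaller than $r^{-\alpha_{\max}}$ whenever $\alpha_{k^*}<\alpha_{\max}$, and there is no reason the ``big'' entry of a fixed row lands in the column with the largest index. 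The fix is to pigeon-hole over the \emph{column} rather than the row. Fix $k^*$ with $\alpha_{k^*}=\alpha_{\max}$; since $\|A^{-1}\|\leq\Lambda$, the column $a_{k^*}(y)=A(y)e_{k^*}$ has $\|a_{k^*}(y)\|\geq 1/\Lambda$, so there is a row index $i^*$ (depending on $y$, which is harmless) with $|A_{i^*k^*}(y)|\geq 1/(\sqrt d\,\Lambda)$. The $(i^*,k^*)$ term of the double sum then requires $|h|\geq 3\,r^{\alpha_{\max}/\alpha_{i^*}}/|A_{i^*k^*}(y)|$ to make $\psi_{i^*}=1$, and integrating the kernel over this region yields a contribution $\gtrsim |A_{i^*k^*}(y)|^{\alpha_{\max}}\,r^{-\alpha_{\max}^2/\alpha_{i^*}}$. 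Since $\alpha_{i^*}\leq\alpha_{\max}$ and $r\leq 1$, $r^{-\alpha_{\max}^2/\alpha_{i^*}}\geq r^{-\alpha_{\max}}$, which is exactly what you need. With that correction the rest of the argument goes through.
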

\begin{proof}[Proof]
First note
\begin{equation}\label{dto1}
\begin{aligned}
 \mathbb{E}^z \left[ \tau_{M_r(y)}\right]& =  \mathbb{E}^z \left[\min\limits_{1\leq i\leq d} \inf\{ t\geq 0 : X_t^i \notin (y_i - r^{(\alpha_{\max}/\alpha_i)},y_i - r^{(\alpha_{\max}/\alpha_i)} ) \} \right] \\
& \leq \frac{1}{d} \sum_{i=1}^d \mathbb{E}^z \left[ \inf\{ t\geq 0 : X_t^i \notin (y_i - r^{(\alpha_{\max}/\alpha_i)},y_i - r^{(\alpha_{\max}/\alpha_i)} ) \} \right] =: \frac{1}{d} \sum_{i=1}^d \mathbb{E}^z \left[\Upsilon_i\right].
\end{aligned} 
\end{equation}
Let $j\in\{1,\dots,d\}$ be fixed but arbitrary.
The aim is to show that there exists $c_2>0$ such that
\begin{equation}\label{Upsilon_j estimate}
 \mathbb{E}^z (\Upsilon_j) \leq c_2 r^{\alpha_{\max}}.
 \end{equation}
Since we reduced the problem to a one-dimensional one, we may suppose by scaling $r=1$.
Let 
\[\kappa:=\inf\left\{ |A(x)e_j|: x\in \overline{M_1(x)} \right\}.\]
By Assumption (i), we have $\kappa>0$. There exists a $c_3\in(0,1)$ with
 \[\mathbb{P}^z(\exists s\in[0,1]:\Delta Z_s^j \in \R\setminus[-3/\kappa,3/\kappa])\geq c_3.\]
The independence of the one-dimensional processes implies that with probability zero at least two of the $Z^i$'s make a jump at the same time. This leads to
\begin{equation}\label{leave-estimate}
 \mathbb{P}^z(\exists s\in[0,1]:\Delta Z_s^j>\frac{3}{\kappa} \text{ and } \Delta Z_s^i = 0 \text{ for } i\in\{1,\dots,d\}\setminus\{j\})\geq c_3.
\end{equation}
Our aim is to show that the probability of the process $X$ for leaving $M_1(x)$ in the j$^{\text{th}}$ coordinate after time $m$ is bounded in the following way
 \[\mathbb{P}^{z}(\Upsilon_j>m) \leq (1-k_j)^m \quad \text{for all } \ m\in\N.\]
Suppose there exists $s\in[0,1]$ such that $\Delta Z_s^j > \frac{3}{\kappa}$, $\Delta Z_s^i=0$ for $i\in\{1,\dots,d\}\setminus\{ j\}$, and
$X_{s-}\in M_1(x).$ Then
\[ |\Delta X_s^j|= |\Delta Z_s^j| \ |A(X_{s-})(e_j)| > 3.\]
Note, that we leave $M_1(x)$ by this jump.
By \eqref{leave-estimate}
\[ \mathbb{P}^{z}(\Upsilon_j\leq 1) \geq c_3 \Leftrightarrow \mathbb{P}^{z}(\Upsilon_j>1)\leq (1-c_3).\]
Let $\{\theta_t : t\geq 0\}$ denote the shift operators for $X$. Now assume $\mathbb{P}^{z}(\Upsilon_j>m) \leq (1-c_3)^m$. By the Markov property
\begin{align*}
 \mathbb{P}^{z}(\Upsilon_j>m+1) &  \leq \mathbb{P}^{z}(\Upsilon_j>m ; \Upsilon_j \circ \theta_m >1 ) \\
 & = \mathbb{E}^{z}[\mathbb{P}^{X_m}(\Upsilon_j > 1); \Upsilon_j > m] \\
 & \leq (1-c_3)\mathbb{P}^{z}(\Upsilon_j>m) \\
 & \leq (1-c_3)^{m+1}.
\end{align*}
Assertion \eqref{Upsilon_j estimate} follows by
\[\mathbb{E}^{x}[\Upsilon_j] = \int_{0}^{\infty} \mathbb{P}^z(\Upsilon_j>t)dt \leq \sum_{m=0}^\infty \mathbb{P}^z (\Upsilon_j>m)\leq \sum_{m=0}^\infty (1-c_3)^m = c_2, \]
where we used the fact that the sum on the right hand side is a geometric sum.
Thus the assertion follows by \eqref{dto1} and \eqref{Upsilon_j estimate}.
\end{proof}
We close this section by giving an estimate for leaving a rectangle with a comparatively big jump.
\begin{proposition}\label{enter-exit}
 Let $x\in\R^d$, $r\in(0,1]$ and $R\geq 2r$. There exists a constant $c_1=c_1(\Lambda(\R^d),d)>0$, such that for all $z\in M_r(x)$
 \[ \mathbb{P}^z (X_{\tau_{M_r(x)}}\notin M_R(x)) \leq c_1 \left(\frac{r}{R}\right)^{\alpha_{\max}}. \]
\end{proposition}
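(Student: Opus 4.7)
The plan is to apply the Lévy system formula from \autoref{Levy-system} with $D = M_r(x)$ and $E = M_R(x)^c$, observing that because $\nu$ is supported on the coordinate axes, the only way to exit $M_r(x)$ into $M_R(x)^c$ is via a single large jump along one coordinate direction. Since $R \geq 2r$ and $\alpha_{\max}/\alpha_k \geq 1$, we have $R^{\alpha_{\max}/\alpha_k} - r^{\alpha_{\max}/\alpha_k} \geq \tfrac{1}{2} R^{\alpha_{\max}/\alpha_k} > 0$, so $\dist(D,E) > 0$ and the Lévy system formula applies.

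First, I would use optional stopping to write
\[
\mathbb{P}^z\bigl(X_{\tau_{M_r(x)}} \notin M_R(x)\bigr)
= \mathbb{E}^z\!\left[\sum_{s \leq \tau_{M_r(x)}} \mathds{1}_{\{X_{s-} \in M_r(x),\, X_s \notin M_R(x)\}}\right]
= \mathbb{E}^z\!\left[\int_0^{\tau_{M_r(x)}} J(X_s)\, ds\right],
\]
where $J(y)$ is the Lévy intensity from $y \in M_r(x)$ to $M_R(x)^c$. Because for $y \in M_r(x)$ the coordinates $y_i$ with $i \neq k$ already lie in $(x_i - R^{\alpha_{\max}/\alpha_i}, x_i + R^{\alpha_{\max}/\alpha_i})$, the relevant constraint after a jump along direction $k$ is $h_k \notin (x_k - R^{\alpha_{\max}/\alpha_k}, x_k + R^{\alpha_{\max}/\alpha_k})$, which forces $|h_k - y_k| \geq R^{\alpha_{\max}/\alpha_k} - r^{\alpha_{\max}/\alpha_k} \geq \tfrac{1}{2}R^{\alpha_{\max}/\alpha_k}$.

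Next I would estimate each one-dimensional integral:
\[
\int_{|h_k - y_k| \geq R^{\alpha_{\max}/\alpha_k}/2} \frac{dh_k}{|h_k - y_k|^{1+\alpha_k}} \leq \frac{2^{1+\alpha_k}}{\alpha_k}\, R^{-\alpha_{\max}}.
\]
Since by Assumption (ii) the quantities $|(a_k^\tau(y))^{-1}|$ are bounded by a constant depending only on $\Lambda(\R^d)$, summing over $k \in \{1,\dots,d\}$ yields $J(y) \leq c_2 R^{-\alpha_{\max}}$ for all $y \in M_r(x)$, where $c_2 = c_2(\Lambda(\R^d),d)$. Combining this with \autoref{exittime} gives
\[
\mathbb{P}^z\bigl(X_{\tau_{M_r(x)}} \notin M_R(x)\bigr) \leq c_2 R^{-\alpha_{\max}}\, \mathbb{E}^z[\tau_{M_r(x)}] \leq c_1 \left(\frac{r}{R}\right)^{\alpha_{\max}},
\]
which is the desired estimate.

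No step is genuinely hard; the only point requiring care is the geometric comparison $R^{\alpha_{\max}/\alpha_k} - r^{\alpha_{\max}/\alpha_k} \geq \tfrac{1}{2} R^{\alpha_{\max}/\alpha_k}$, which depends crucially on $R \geq 2r$ together with $\alpha_{\max}/\alpha_k \geq 1$, and the bookkeeping to verify that the final power in $R$ is exactly $\alpha_{\max}$ (and not $\alpha_k$) for every direction $k$ — this is precisely where the anisotropic scaling built into $M_r(\cdot)$ pays off.
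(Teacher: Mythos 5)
Your proof is correct and follows essentially the same route as the paper's: apply the Lévy system formula from \autoref{Levy-system} with $D=M_r(x)$ and $E=M_R(x)^c$, use optional stopping, bound the one-dimensional tail integrals via $R^{\alpha_{\max}/\alpha_k}-r^{\alpha_{\max}/\alpha_k}\geq \tfrac12 R^{\alpha_{\max}/\alpha_k}$ (which relies on $R\geq 2r$ and $\alpha_{\max}/\alpha_k\geq1$, exactly the point you flag), and then invoke the exit-time bound of \autoref{exittime}. Your intermediate step of bounding $|h_k-y_k|$ directly rather than replacing $X_s^k$ by the boundary point $x_k+r^{\alpha_{\max}/\alpha_k}$ is a minor stylistic difference, not a different argument.
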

\begin{proof}[Proof]
Let
\[C_j:= \R\setminus [x_j-R^{\alpha_{\max}/\alpha_j},x_j+R^{\alpha_{\max}/\alpha_j}]\]
and for $1\leq j\leq d$ let $k_j = \sup_{x\in\R} |(a_j^\tau(x))^{-1}|c_{\alpha_j}$.
By \autoref{Levy-system} and optional stopping we get for $c_2=\sum_{j=1}^d ((2k_j2^\alpha_{\max})/c_{\alpha_j})\leq 8d \sup_{x\in\R} |(a_j^\tau(x))^{-1}|$
 \begin{align*}
  \mathbb{P}^z \left(X_{t \wedge \tau_{M_r(x)}}\notin M_R(x)\right)
  & = \mathbb{E}^z \left[\int_{0}^{t\wedge \tau_{M_r(x)}} \int_{M_R(x)^c} \sum_{j=1}^{d} \frac{|(a_j^\tau(X_s))^{-1}|c_{\alpha_j}}{|h_j-X_s^{j}|^{1+\alpha_j}} \left(\prod_{i\neq j}\delta_{\{X_s^{i}\}}(dh_i) \right) dh_j ds\right] \\
  & \leq \mathbb{E}^z \left[ \int_{0}^{t\wedge \tau_{M_r(x)}} \sum_{j=1}^d \int_{C_j} \frac{k_j}{|h_j-X_s^{j}|^{1+\alpha_j}} dh_j ds \right] \\
  & \leq \mathbb{E}^z \left[ \int_{0}^{t\wedge \tau_{M_r(x)}} \sum_{j=1}^d \int_{C_j} \frac{k_j}{|h_j-(x_j+r^{\alpha_{\max}/\alpha_j})|^{1+\alpha_j}} dh_j ds \right] \\
  & = \mathbb{E}^z [t\wedge\tau_{M_r(x))}] \sum_{j=1}^d \frac{2k_j}{\alpha_j (R^{\alpha_{\max}/\alpha_j}-r^{\alpha_{\max}/\alpha_j})^{\alpha_j}} \\
  &\leq \mathbb{E}^z [t\wedge\tau_{M_r(x))}] \sum_{j=1}^d \frac{2k_j}{\alpha_j ((R/2)^{\alpha_{\max}/\alpha_j})^{\alpha_j}} = \frac{c_2}{R^{\alpha_{\max}}}\mathbb{E}^z [t\wedge\tau_{M_r(x))}].
 \end{align*}
Using the monotone convergence on the right and dominated convergence on the left, we have for $t\to\infty$
\begin{align*}
\mathbb{P}^z(X_{t \wedge \tau_{M_r(x)}}\notin M_R(x)) \leq \frac{c_2}{R^{\alpha_{\max}}} \mathbb{E}^z (\tau_{M_r(x)}) \leq c_2 c_3\left(\frac{r}{R}\right)^{\alpha_{\max}},
\end{align*}
where $c_3$ is the constant showing up in the estimate $\mathbb{E}^z (\tau_{M_r(x)})\leq c_3 r^\alpha$ of \autoref{exittime}.
\end{proof}

\section{The support theorem}\label{sectionsupport}
In this section we prove the main ingredient for the proof of the H\"older regularity estimate for harmonic functions. The so-called support theorem 
states that sets of positive Lebesgue measure are hit with positive probability.

This theorem was first proved in \cite{KSA} for the diffusion case.
In the article \cite{BCR}, Bass and Chen prove the support theorem in the context of pure jump processes with singular and anisotropic kernels. 
They consider the system \eqref{system of equations} in the case $\alpha_i=\alpha$ for all $i\in\{1,\dots,d\}$
and use the technique by Krylov and Safonov to prove H\"older regularity with the help of the support theorem.\\
The idea we use to prove the support theorem is similar in spirit to the one in \cite{BCR}.

The following Lemma is a statement about the topological support of the law of the stopped process. It gives the existence of a bounded stopping time $T$
such that with positive probability the stopped process stays in a small ball around its starting point up to time $T$, makes a jump along the k$^{\text{th}}$ coordinate axis and 
stays afterwards in a small ball.
\begin{lemma}\label{Lemma1}
Let $r\in(0,1]$, $x_0\in\R^d, k\in\{1,\dots,d\}, v_k=A(x_0)e_k,$
$\gamma\in(0,r^{\alpha_{\max}/\alpha_{\min}}), t_0>0$ and $\xi\in[-r^{\alpha_{\max}/\alpha_{\min}},r^{\alpha_{\max}/\alpha_{\min}}]$.
There exists a constant $c_1>0=c_1(\gamma,t_0,\xi,r,\Lambda(M^{2}_r(x_0))),\m(M^{2}_r(x_0)))>0$ and a stopping time $T\leq t_0$,
 such that
 \begin{equation}
  \begin{aligned}
   \mathbb{P}^{x_0} \left(\sup\limits_{s<T}|X_s-x_0|<\gamma \text{ and } \sup\limits_{T\leq s \leq t_0} |X_s - (x_0+\xi v_k)|<\gamma\right)\geq c_1.
  \end{aligned}
 \end{equation}
 \end{lemma}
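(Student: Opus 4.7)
The plan is to engineer the displacement $x_0\mapsto x_0+\xi v_k$ by isolating one single large jump of $Z^k$ of size close to $\xi$, while forcing every other driving process to move only by small jumps on $[0,t_0]$. Since the $Z^j$ are independent and the large jumps of a symmetric $\alpha_j$-stable process form a Poisson point process with explicit intensity $c_{\alpha_j}|h|^{-1-\alpha_j}\,dh$, such a scenario occurs with a probability depending only on the declared quantities.

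Fix auxiliary parameters $\eta,\delta,\gamma'>0$ to be tuned later. Using the L\'evy--It\^o decomposition, write $Z^j=M^{j,\eta}+J^{j,\eta}$, where $M^{j,\eta}$ is the compensated sum of jumps of size $\le\eta$ (a purely discontinuous martingale with jumps bounded by $\eta$) and $J^{j,\eta}$ is the compound Poisson process of jumps of size $>\eta$. Introduce the events
\begin{align*}
E_1 &:= \{J^{j,\eta}\text{ has no jumps in }[0,t_0]\text{ for every }j\neq k\},\\
E_2 &:= \{J^{k,\eta}\text{ has exactly one jump in }[0,t_0],\text{ at a time }T,\text{ with }\Delta Z^k_T\in(\xi-\delta,\xi+\delta)\},\\
E_3 &:= \Bigl\{\sup_{s\le t_0}\max_j |M^{j,\eta}_s|\le\gamma'\Bigr\},
\end{align*}
and set $T:=t_0$ off $E_2$, so that $T$ is a stopping time bounded by $t_0$. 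The probabilities $\mathbb{P}(E_1)$ and $\mathbb{P}(E_2)$ are strictly positive and explicit from the stable L\'evy measure. For $E_3$, the bound $\mathbb{E}[M^{j,\eta},M^{j,\eta}]_{t_0}\le c\,t_0\,\eta^{2-\alpha_j}$ together with Doob's inequality gives $\mathbb{P}(E_3)\ge 3/4$ once $\eta$ is small compared with $\gamma'$. By independence of the $Z^j$, $\mathbb{P}(E_1\cap E_2\cap E_3)>0$.

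Next I would translate these driving-process events into pathwise control of $X$. Let $\sigma:=\inf\{s\ge 0:X_s\notin M^2_r(x_0)\}$. On $[0,(\sigma\wedge T)-)$ only the small-jump martingales drive $X$, so Burkholder--Davis--Gundy together with $|A|\le\Lambda(M^2_r(x_0))$ yields
\begin{equation*}
\mathbb{E}\sup_{s<\sigma\wedge T}|X_s-x_0|^2 \;\le\; c\,\Lambda(M^2_r(x_0))^2\sum_{j=1}^d \mathbb{E}[M^{j,\eta},M^{j,\eta}]_{t_0} \;\le\; c'\,t_0\,\eta^{2-\alpha_{\min}},
\end{equation*}
which can be made $\le(\gamma/4)^2$ by further shrinking $\eta$. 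At the jump time $T$ only $Z^k$ jumps, so $\Delta X_T=a_k(X_{T-})\,\Delta Z^k_T$, and therefore
\begin{equation*}
|X_T-(x_0+\xi v_k)| \;\le\; |X_{T-}-x_0|\;+\;|\xi|\,|a_k(X_{T-})-v_k|\;+\;\delta\,\Lambda(M^2_r(x_0)),
\end{equation*}
in which the middle term tends to $0$ via the modulus of continuity $\m(M^2_r(x_0))$ as $|X_{T-}-x_0|\to 0$. The same BDG argument applied on $[T,t_0]$ controls the oscillation after the jump.

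It remains to tune the parameters: choose $\gamma'$ and $\delta$ first so that every contribution above is $<\gamma/3$, and then choose $\eta$ small enough to guarantee $\mathbb{P}(E_3)\ge 3/4$ as well as the required displacement bound. The main obstacle is the localization step needed to ensure that $X$ never exits $M^2_r(x_0)$ on the good event, so that the constants $\Lambda(M^2_r(x_0))$ and $\m(M^2_r(x_0))$ are valid throughout the analysis. This is handled by localizing at $\sigma$ and observing that the total Euclidean displacement is at most $\gamma+|\xi|\,\Lambda(M^2_r(x_0))$, which is dominated by the side lengths $2r^{\alpha_{\max}/\alpha_i}$ of $M^2_r(x_0)$ for the stated range $\gamma<r^{\alpha_{\max}/\alpha_{\min}}$ (possibly after enlarging the reference box to absorb the factor $\Lambda$).
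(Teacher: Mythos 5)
Your plan follows exactly the paper's strategy: split each $Z^j$ at a threshold into a compensated small-jump martingale and a compound Poisson process of large jumps, engineer a single large jump of $Z^k$ of size near $\xi$ while suppressing large jumps elsewhere, control the small-jump contribution via a second-moment plus Doob/Chebyshev argument, and use the modulus of continuity of $A$ to control the error at the jump time. However, there are two concrete gaps in the execution.

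First, your event $E_3$ is a bound on the driving martingales $M^{j,\eta}$, but what you actually need --- and what you later try to get from BDG --- is a bound on the solution itself. There is no immediate implication from $E_3$ to a pathwise bound on $X$ without a Gronwall argument (which the hypotheses do not support, since $A$ is only continuous, not Lipschitz), so the independence step $\mathbb{P}(E_1\cap E_2\cap E_3)>0$ lands on the wrong event. Moreover your BDG bound for $\mathbb{E}\sup_{s<\sigma\wedge T}|X_s-x_0|^2$ rests on the claim that only the small-jump martingales drive $X$ on $[0,\sigma\wedge T)$, but $T$ is a stopping time for $J^{k,\eta}$ only: off $E_1$, the processes $J^{j,\eta}$ for $j\neq k$ may jump before $T$, invalidating the estimate. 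The paper sidesteps both problems by introducing a decoupled process $\overline X$ solving the SDE driven by $\overline Z=Z-\widetilde Z$ alone, defining the "stay close" event $C$ on $\overline X$ directly, and proving $\mathbb{P}(C)\ge 1/2$ via Doob/Chebyshev; then $C$ is $\sigma(\overline Z)$-measurable and hence genuinely independent of the large-jump events, and $X$ agrees with $\overline X$ up to time $T$ on the good event. You should restructure your argument accordingly. Second, when $|\xi|$ is small (smaller than the jump threshold $\eta$ plus $\delta$), your event $E_2$ is empty, because $J^{k,\eta}$ only has jumps of size greater than $\eta$. The paper treats $\xi<\gamma/(3\|A\|_\infty)$ as a separate case with $T\equiv 0$, using only the "no large jumps and small-jump process stays close" event; you need to add that case.
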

 \begin{proof}
 Let \[\|A\|_\infty := 1 \vee \left(\sum_{i,j=1}^{d} \sup\limits_{x\in M^{2}_r(x_0)} |A_{ij}(x)|\right).\]
 We assume $\xi\in[0,r^{\alpha_{\max}/\alpha_{\min}}]$. The case $\xi\in[-r^{\alpha_{\max}/\alpha_{\min}},0]$ can be proven similar.
 Let us first suppose $\xi\geq \gamma /(3\|A\|_\infty)$ and let $\beta\in (0,\xi)$, which will be chosen later.
 We decompose the process $Z_t^i$ in the following way:
 \[ \widetilde{Z}_t^i = \sum_{s\leq t} \Delta Z_s^i \mathds{1}_{\{|\Delta Z_s^i|>\beta\}}, \quad \overline{Z}_t^i = Z_t^i - \widetilde{Z}_t^i. \]
 Let $(\overline{X}_t)_{t\geq0}$ be the solution to  
 \[ d\overline{X}^i_{t} = \sum_{j=1}^d A_{ij}(\overline{X}_{t-})d\overline{Z}_{t}^j, X_0^i = x_0^i.\]
 The continuity of $A$ allows us to find a $\ \delta< \gamma / (6\|A\|_\infty)$, such that
 \begin{equation}\label{supsup-estimate}
  \begin{aligned}
   \sup\limits_{i,j}\sup\limits_{|x-x_0|<\delta} |A_{ij}(x)-A_{ij}(x_0)|<\frac{\gamma}{12d}.
  \end{aligned}
 \end{equation}
Consider
\begin{align*}
 C &= \left\{ \sup\limits_{s\leq t_0}|\overline {X}_s - \overline{X}_0|\leq \delta \right\}, \\
 D &= \Big\{ \widetilde{Z}^k \text{ has precisely one jump before time } t_0 \text{ with jump size in } [\xi,\xi+\delta],\\
 & \qquad  \Delta\widetilde{Z}_s^j = 0 \text{ for all } s\leq t_0 \text{ and all } j\neq k \Big\}, \\
 E &= \left\{ \widetilde{Z}_s^i=0 \text{ for all } s\leq t_0 \text{ and } i=1,\dots,d \right\}.
\end{align*}
Since $A$ is bounded, we can find $c_2>0$, such that
\[ \left[ \overline{X}^i, \overline{X}^i \right]_t\leq c_2 \sum_{j=1}^d \left[\overline{Z}^j, \overline{Z}^j\right]_t.\]
Note, that $\beta\in(0,\xi)\subset(0,r^{\alpha_{\max}/\alpha_{\min}})\subset (0,1)$. Therefore, we get
\begin{align*}
\mathbb{E}^{x_0}\left[\overline{X}^i,\overline{X}^i\right]_t &\leq c_2 \sum_{j=1}^d \mathbb{E}^{x_0}\left[\overline{Z}^j,\overline{Z}^j\right]_t = c_2 \sum_{j=1}^d \int_{0}^t\left( \int_{-\beta}^{\beta} \frac{c_{\alpha_j}h^2}{|h|^{1+\alpha_j}} dh\right)dt  \leq c_3 t d \beta^{2-\alpha_{\max}}.
 \end{align*}
By Tschebyscheff's inequality and Doob's inequality, we get
\[ \mathbb{P}^{x_0}\left[ \sup\limits_{s\leq t_0} |\overline{X}_s^i-\overline{X}_0^i|>\delta \right] \leq \frac{1}{\delta^2}\mathbb{E}^{x_0}\left[ \sup\limits_{s\leq t_0} \left( \overline{X}_s^i - \overline{X}_0^i \right)^2 \right]
\leq \frac{1}{\delta^2} 4 \mathbb{E}^{x_0}\left[\left( \overline{X}_{t_0}^i - \overline{X}_0^i \right)^2\right] \leq \frac{c_4 t_0 d \beta^{2-\alpha_{\max}}}{\delta^2}. \]
Choose $\beta\in (0,\xi)$ such that
\begin{equation}\label{beta-estimate}
 \begin{aligned}
  c_5 t_0 \beta^{2-\alpha_{\max}} \leq \frac{\delta^2}{2d}
 \end{aligned}
\end{equation}
holds. Then by \eqref{beta-estimate}, we get
\begin{equation}\label{estimate-C}
 \mathbb{P}^{x_0}(C) = 1- \mathbb{P}^{x_0} \left( \sup\limits_{s\leq t_0} |\overline{X}_s^i-\overline{X}_0^i|>\delta \right) \geq \frac{1}{2}.
\end{equation}
For $\widetilde{Z}^k$ to have a single jump before time $t_0$,
and for that jump's size to be in the interval
$[\xi,\xi+\delta]$, then up to time $t_0$ $\widetilde{Z}^k_t$ must have {\ }\vspace*{-1.3em}
\begin{enumerate}
 \item[(i)] no negative jumps, \label{1}
 \item[(ii)] no jumps whose size lies in $[\beta,\xi),$ \label{2}
 \item[(iii)] no jumps whose size lies in $(\xi+\delta,\infty),$ \label{3}
 \item[(iv)] precisely one jump whose size lies in the interval $[\xi,\xi+\delta]$. \label{4}
\end{enumerate}
We can use the fact, that $\widetilde{Z}^k$ is a compound Poisson process and use the knowledge about Poisson random measures.
The events descriped in (i)-(iv) are the probabilities that Poisson
random variables $ P_1,P_2.P_3$ and $P_4$ of parameters $\lambda_1=c_6 t_0 \beta^{-\alpha_k}$, $\lambda_2=c_6 t_0 (\beta^{-\alpha_k}-\xi^{-\alpha_k})$,
$\lambda_3=c_6 t_0 (\xi+\delta)^{-\alpha_k}$,
and $\lambda_4=c_6 t_0 (\xi^{-\alpha_k} - (\xi+\delta)^{-\alpha_k})$, respectively, take the values $0,0,0,$ and $1$, respectively.  \newline
So there exists a constant $c_7=c_7(\alpha_k,t_0,\delta, \xi, \beta)>0$ such that
\begin{align*}
 \mathbb{P}^{x_0}\big( &\widetilde{Z}^k \text{ has a single jump before time $t_0$, and its size is in } [\xi,\xi+\delta]\big) \geq c_7.
\end{align*}
For all $j\neq k$, the probability that $\widetilde{Z}^{j}$ does not have a jump before time $t_0$, is the probability that a Poisson random variable
with parameter $2c_6 t_0\beta^{-\alpha_j}$ is equal to $0$.
Using the indepence of $\widetilde{Z}^j$ for $j=1,\dots,d$, we can find a $c_8>0$ such that
\begin{align*}
 \mathbb{P}^{x_0}(\Delta\widetilde{Z}^j_s = 0 \text{ for all } s\leq t_0 \text{ and all } j\neq k) \geq c_8.
\end{align*}
Thus we obtain
\[ \mathbb{P}^{x_0}(D)\geq c_9 \]
for a $c_9=c_9(\alpha_1,\dots,\alpha_d,t_0,\delta, \xi, \beta)>0$.
Furthermore the $\overline{Z}^i$'s are independent of the $\widetilde{Z}^j$'s for all $i,j\in\{1,\dots,d\}$,
so $C$ and $D$ are independent and we obtain
\begin{align*}
 \mathbb{P}^{x_0}(C\cap D)\geq c_9/2.
\end{align*}
Similary we obtain
\begin{align}\label{C cap E}
\mathbb{P}^{x_0}(E)\geq c_{10} \quad \text{and} \quad \mathbb{P}^{x_0}(C\cap E)\geq c_{11}.
\end{align}\newline
Let $T$ be the time, when $\widetilde{Z}^k$ jumps the first time, i.e. $Z^k$ makes a jump greater then $\beta$.
Then $Z_{s-}=\overline{Z}_{s-}$ for all $s\leq T$ and hence $X_{s-}=\overline{X}_{s-}$ for all  $s\leq T.$
So up to time $T$, $X_s$ does not move away more than $\delta$ away from its starting point. Note
$\Delta X_T = A(X_{T-})\Delta Z_T.$
By \eqref{supsup-estimate}, we obtain on $C\cap D$
\begin{align*}
 |X_T-(x_0+\xi v_k)|
 & \leq |X_{T-}-x_0| + |\Delta X_T - \xi A(x_0)e_k))| \\
 & = |X_{T-}-x_0| + | A(X_{T-})\Delta Z_T - \xi A(x_0)e_k) | \\
 & \leq |X_{T-}-x_0| + \xi |(A(X_{T-})-A(x_0))e_k| + |A(X_{T-})(\Delta Z_T -\xi e_k)| \\
 & \leq \delta + \frac{\xi d \gamma}{12d} + \delta \|A\|_{\infty} \leq \frac{\gamma}{6} \left( \frac{1}{\|A\|_\infty} + \frac{\gamma}{2} + 1\right) \leq \frac{\gamma}{2}.
\end{align*}
Appling the strong Markov property at time $T$, we get by \eqref{C cap E}
\begin{align*}
\mathbb{P}^{x_0}\left( \sup\limits_{T\leq s\leq T+t_0} |X_s-X_T|<\delta \right) \geq\mathbb{P}^{X_T} (C\cap E) \geq c_{11}.
\end{align*}
Note, that $ |X_T-(x_0+\xi v_k)|<\gamma/2$ and $|X_s-X_T|<\delta$ for all $T\leq s \leq t_0$ imply $|X_s-(x_0+\xi v_k)|<\gamma.$\newline
All in all we get by the strong Markov property
\[    \mathbb{P}^{x_0} \left(\sup\limits_{s<T}|X_s-x_0|<\gamma \text{ and } \sup\limits_{T\leq s \leq t_0} |X_s - (x_0+\xi v_k)|<\gamma\right)\geq \frac{c_{9}c_{11}}{2}, \]
which proves the assertion. \newline
Now suppose $\xi <\gamma/(3\|A\|_\infty$). Then $|x_0-(x_0+\xi v_k)|<\gamma/3$. We can choose $T\equiv0$ and by \eqref{C cap E} we get:
\[ \mathbb{P}^{x_0}\left( \sup\limits_{s\geq t_0} |X_s-x_0|<\delta \right)\geq c_{11}, \]
which finishes the proof.
\end{proof}
We need two simple geometrical facts from the field of linear algebra, whose proofs can be found in \cite{BCR} (Lemma 2.4 and Lemma 2.5).
\begin{lemma}\label{linearalgebra1}
 Suppose $u,v$ are two vectors in $\R^d$, $\eta\in(0,1)$, and $p$ is the projection of $v$ onto $u$.
 If $|p|\geq \eta |v|$, then
 \[ |v-p|\leq \sqrt{1-\eta^2}|v|. \]
\end{lemma}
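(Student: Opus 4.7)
The statement is purely Euclidean and follows from a one-line application of the Pythagorean theorem, so the plan is almost self-contained. Since $p$ is the orthogonal projection of $v$ onto (the line spanned by) $u$, the decomposition $v = p + (v-p)$ is orthogonal: $p \cdot (v-p) = 0$. Consequently
\[
|v|^2 = |p|^2 + |v-p|^2,
\]
and this identity is what carries the proof.

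From here I would simply solve for $|v-p|^2$ and insert the hypothesis. The assumption $|p| \geq \eta |v|$ gives $|p|^2 \geq \eta^2 |v|^2$, hence
\[
|v-p|^2 = |v|^2 - |p|^2 \leq (1-\eta^2)\,|v|^2,
\]
and taking square roots (permissible since $\eta \in (0,1)$ makes the right-hand side nonnegative) yields the claimed inequality $|v-p| \leq \sqrt{1-\eta^2}\,|v|$.

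There is essentially no obstacle: the only subtlety is the trivial one of noting that the projection is defined so as to make $v-p$ perpendicular to $p$, which is the defining property rather than something to be proved. The inequality is in fact sharp, attained when $|p|=\eta|v|$.
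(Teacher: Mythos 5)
Your argument is correct: the orthogonal decomposition $v = p + (v-p)$ with $p\cdot(v-p)=0$ gives the Pythagorean identity $|v|^2 = |p|^2 + |v-p|^2$, and substituting $|p|^2 \geq \eta^2|v|^2$ yields the claim. The paper itself omits the proof, deferring to Lemma 2.4 of the cited reference, and your one-line Pythagorean argument is exactly the standard (and essentially unique) way to establish this elementary fact.
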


\begin{lemma}\label{linearalgebra2}
 Let $v$ be a vector in $\R^d$, $u_k = Ae_k$, and $p_k$ the projection
 of $v$ onto $u_k$ for $k=1,\dots,d.$ Then there exists $\rho=\rho(\Lambda(\R^d))\in (0,1)$, such that for some $k$,
 \[ |v-p_k|\leq \rho |v|.\]
\end{lemma}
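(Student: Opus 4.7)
My approach is to exploit the invertibility of $A$ from Assumption~(i) to expand $v$ in the basis $\{u_1,\dots,u_d\}$, extract a quantitative lower bound on $\max_k|\langle v,u_k\rangle|$, and then apply \autoref{linearalgebra1}. Throughout, I write $\Lambda:=\Lambda(\R^d)$ for the uniform upper bound on $A$, and observe that Assumption~(ii) provides an additional uniform upper bound $\Lambda'$ for $\|A^{-1}(x)\|$ on $\R^d$; both quantities are implicitly part of what the statement calls the dependence on $\Lambda(\R^d)$.

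The first step is to set $c := A^{-1}v \in \R^d$, so that $v = \sum_{k=1}^d c_k u_k$ and $|c| \leq \Lambda'\,|v|$. Taking the inner product with $v$ on both sides yields the key identity
\[
|v|^2 \;=\; \sum_{k=1}^d c_k\,\langle v,u_k\rangle \;\leq\; \sqrt{d}\,|c|\,\max_{1\leq k\leq d}|\langle v,u_k\rangle|,
\]
from which there exists an index $k^*$ with $|\langle v,u_{k^*}\rangle| \geq |v|/(\sqrt{d}\,\Lambda')$. Dividing by $|u_{k^*}| \leq \Lambda$, the magnitude of the projection $p_{k^*}$ of $v$ onto $u_{k^*}$ satisfies
\[
|p_{k^*}| \;=\; \frac{|\langle v,u_{k^*}\rangle|}{|u_{k^*}|} \;\geq\; \eta\,|v|, \qquad \eta := \frac{1}{\sqrt{d}\,\Lambda\,\Lambda'} \in (0,1).
\]

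The second step is a direct application of \autoref{linearalgebra1} with this $\eta$: it produces $|v - p_{k^*}|\leq \sqrt{1-\eta^2}\,|v|$, so the assertion follows with $\rho := \sqrt{1-\eta^2} \in (0,1)$. Since $\eta$ depends only on $d$, $\Lambda$ and $\Lambda'$, so does $\rho$, matching the dependence claimed in the statement. I do not expect any substantive obstacle: the result reduces to elementary finite-dimensional linear algebra once one exploits the invertibility of $A$ to write the key identity $|v|^2 = \sum_k c_k\langle v,u_k\rangle$. The only mild subtlety is bookkeeping, namely verifying that the bound on $\eta$ (and hence on $\rho$) really only uses the structural constants already present in the hypotheses and not any finer information about $v$ or the specific geometry of the columns $u_k$.
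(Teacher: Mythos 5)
The paper does not supply a proof of this lemma; it defers to \cite{BCR} (Lemma 2.5), so there is no in-paper argument to compare against. Your argument is correct as a self-contained proof: writing $v=\sum_k c_k u_k$ with $c=A^{-1}v$ and pairing with $v$ gives $|v|^2=\sum_k c_k\langle v,u_k\rangle\leq\sqrt{d}\,|c|\max_k|\langle v,u_k\rangle|$, the bound $|c|\leq\Lambda'|v|$ forces some $|\langle v,u_{k^*}\rangle|\geq|v|/(\sqrt{d}\Lambda')$, and dividing by $|u_{k^*}|\leq\Lambda$ gives $|p_{k^*}|\geq\eta|v|$ with $\eta=1/(\sqrt{d}\Lambda\Lambda')\in(0,1)$ (note $\Lambda\Lambda'\geq1$ by submultiplicativity), after which \autoref{linearalgebra1} yields $\rho=\sqrt{1-\eta^2}$. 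You are also right to flag that, strictly read, the lemma's stated dependence on $\Lambda(\R^d)$ alone is incomplete: the constant necessarily involves $d$ and a uniform bound on $\|A^{-1}\|$ (indeed, two nearly parallel columns kill the bound if one controls only $\|A\|$), and these are available from Assumption~(ii) and fixed $d$, so the claim holds in the intended sense; this is a minor imprecision in the statement rather than in your proof.
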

For a given time $t_1>0$ the following lemma shows that solutions stay with positive probability
in an $\varepsilon$-tube around a given line segment on $[0,t_1]$. The case of $\alpha_1=\cdots=\alpha_d$ was considered in \cite{BCR}. We follow their technique.
\begin{lemma}\label{support2}
Let $r\in(0,1]$, $x_0\in\R^d$, $t_1>0, \varepsilon\in(0,r^{\alpha_{\max}/\alpha_{\min}}), \xi\in(0,\varepsilon/4)$ and $\gamma>0.$
Moreover let $\psi:[0,t_1]\to\R^d$ be a line segment of length $\xi$ starting at $x_0$.
Then there exists $c_1=c_1(\Lambda(M_r^2(x_0))), \m(M_r^2(x_0))),t_1, \varepsilon, \gamma)>0$, such that
 \[ \mathbb{P}^{x_0}\left( \sup\limits_{s\leq t_1} |X_s - \psi(s)|<\varepsilon \text{ and } |X_{t_1}-\psi(t_1)|<\gamma \right)\geq c_1. \]
\end{lemma}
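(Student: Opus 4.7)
The plan is to approximate $\psi(t_1)$ starting from $x_0$ by a telescoping sequence of axis-aligned jumps, each realized with positive probability via Lemma~\ref{Lemma1}, linking the $n$ applications by the strong Markov property. Writing $v=\psi(t_1)-x_0$ (so $|v|=\xi$), I would iterate Lemma~\ref{linearalgebra2} to build points $y_0=x_0,y_1,y_2,\ldots$ and indices $k_1,k_2,\ldots\in\{1,\ldots,d\}$ as follows: given $y_j$, set $w_j:=\psi(t_1)-y_j$ and pick $k_{j+1}$ so that the projection $p_{j+1}$ of $w_j$ onto $A(y_j)e_{k_{j+1}}$ satisfies $|w_j-p_{j+1}|\le\rho|w_j|$ for the universal $\rho=\rho(\Lambda(\R^d))\in(0,1)$ from Lemma~\ref{linearalgebra2}; set $y_{j+1}:=y_j+p_{j+1}$. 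A telescoping estimate then gives $|w_j|\le\rho^j\xi$ and $|y_j-x_0|\le 2\xi$. Writing $p_{j+1}=\xi_{j+1}A(y_j)e_{k_{j+1}}$, the scalar $|\xi_{j+1}|$ is controlled by $\xi$ and the uniform lower bound on $|A(\cdot)e_k|$, which is positive by Assumptions~(i)--(ii).

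Next I would pick $n\in\N$ with $\rho^n\xi<\gamma/2$, set $t_0:=t_1/n$, and choose a tolerance $\gamma_0\in(0,\tfrac{1}{3}\min\{\gamma,\varepsilon-4\xi\})$, which is possible because $\xi<\varepsilon/4$. The desired event will be built inductively in $n$ steps. At step $j+1$, assuming the previous good events have occurred so that $X_{jt_0}$ lies within $\gamma_0$ of $y_j$, the strong Markov property at $jt_0$ combined with Lemma~\ref{Lemma1} applied to the shifted process from $X_{jt_0}$ (axis index $k_{j+1}$, horizon $t_0$, scalar $\xi_{j+1}$, tolerance $\gamma_0$, the tiny discrepancy between $A(X_{jt_0})e_{k_{j+1}}$ and $A(y_j)e_{k_{j+1}}$ being absorbed into $\gamma_0$ via continuity of $A$) will yield, with conditional probability $\ge c>0$ uniform in $j$, a stopping time $T_{j+1}\in[jt_0,(j+1)t_0]$ such that $X$ stays within $\gamma_0$ of $y_j$ on $[jt_0,T_{j+1})$ and within $\gamma_0$ of $y_{j+1}$ on $[T_{j+1},(j+1)t_0]$. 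The intersection of all $n$ good events then has probability at least $c^n=:c_1>0$.

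On this intersection, both required estimates follow directly. For $s\in[jt_0,(j+1)t_0]$, $X_s$ is within $\gamma_0$ of $y_j$ or $y_{j+1}$, both of which lie within distance $2\xi$ of $x_0$; since $|\psi(s)-x_0|\le\xi$, this yields $|X_s-\psi(s)|\le\gamma_0+3\xi<\varepsilon$. At $s=t_1$, $X_{t_1}$ is within $\gamma_0$ of $y_n$ with $|y_n-\psi(t_1)|\le\rho^n\xi<\gamma/2$, hence $|X_{t_1}-\psi(t_1)|<\gamma$.

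The main obstacle will be securing a \emph{uniform} lower bound $c$ on the one-step probabilities: the constant supplied by Lemma~\ref{Lemma1} depends on $\Lambda$ and $\m$ over boxes $M^2_{r}(\cdot)$, but since all starting points $X_{jt_0}$ and targets $y_j$ remain in a fixed compact subset of $M^2_r(x_0)$, these quantities are bounded uniformly in $j$. A secondary bookkeeping point is that if some scalar $|\xi_{j+1}|$ falls outside the admissible range $[-r^{\alpha_{\max}/\alpha_{\min}},r^{\alpha_{\max}/\alpha_{\min}}]$ of Lemma~\ref{Lemma1}, one subdivides the corresponding step into a bounded number of smaller sub-steps, which changes $c_1$ by at most a fixed multiplicative factor.
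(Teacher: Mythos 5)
Your overall scheme — iterate Lemma~\ref{Lemma1} via the Markov property, using Lemma~\ref{linearalgebra2} to select axis directions and obtain a geometric contraction of the distance to $\psi(t_1)$ — is the right one and essentially the paper's. However, the specific induction you set up has a genuine gap.

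You replace the paper's adaptive construction by a \emph{deterministic} scaffold $y_0,y_1,\dots$ defined purely from $\psi(t_1)$ and $A(y_j)$, and then try to run the induction with a \emph{fixed} tolerance $\gamma_0$: assuming $X_{jt_0}$ is within $\gamma_0$ of $y_j$, you claim Lemma~\ref{Lemma1} places $X_{(j+1)t_0}$ within the same $\gamma_0$ of $y_{j+1}$. This cannot work. Applying Lemma~\ref{Lemma1} from $X_{jt_0}$ targets $X_{jt_0}+\xi_{j+1}A(X_{jt_0})e_{k_{j+1}}$ up to a tolerance $\delta>0$, while $y_{j+1}=y_j+\xi_{j+1}A(y_j)e_{k_{j+1}}$. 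Thus
\[
|X_{(j+1)t_0}-y_{j+1}|\ \le\ \delta \;+\; |X_{jt_0}-y_j| \;+\; |\xi_{j+1}|\,\big|A(X_{jt_0})e_{k_{j+1}}-A(y_j)e_{k_{j+1}}\big|,
\]
so the new error is at least the old error plus the strictly positive Lemma~\ref{Lemma1} tolerance $\delta$; errors accumulate additively over the $n$ steps and a constant $\gamma_0$ cannot be preserved. (There is also a secondary issue: the axis $k_{j+1}$ and the scalar $\xi_{j+1}$ are chosen optimally for $A(y_j)$, not for $A(X_{jt_0})$, so the projection bound from Lemma~\ref{linearalgebra2} does not directly apply to the jump the process actually makes; this contributes an additional error of order $\omega_A(\gamma_0)\,\xi$ per step.)

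The paper avoids this entirely by re-anchoring each step at the actual random position: it sets $v_j:=\psi(t_1)-X_{jt_1/n}$, projects $v_j$ onto $A(X_{jt_1/n})e_{k_j}$, and targets $X_{jt_1/n}+p_j$. Because the residual vector is recomputed from wherever the process actually landed, the small Lemma~\ref{Lemma1} tolerance $\gamma^{n+1}$ does not compound additively; it is absorbed into a slightly worse contraction factor $\tilde\rho=\rho+\gamma<1$, giving $|v_j|\le\tilde\rho^{\,j}\xi$. Your deterministic version could in principle be repaired — e.g., fix $n$ first from $\rho,\xi,\gamma$, then choose a per-step tolerance $\delta<\gamma/(Cn)$ and track the linearly growing error $j\delta$ rather than claiming a fixed $\gamma_0$ — but as written the fixed-tolerance induction step is false and needs to be replaced either by such explicit bookkeeping or by the paper's adaptive anchoring.
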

\begin{proof}
 Note that $\varepsilon$ is chosen such that $B_{\varepsilon}(x_0)\subset M_r(x_0)$.
 Let $\rho\in(0,1)$ be such that the conclusion of \autoref{linearalgebra2} holds for all matrices $A=A(x)$ with $x\in M_r^{2}(x).$
 Take $\gamma \in (0,\xi \wedge \rho)$ such that $\widetilde{\rho}:=\gamma + \rho<1$ and $n\geq 2$ sufficiently large, such that
  $(\widetilde{\rho})^n < \gamma.$
 Let $v_0:=\psi(t_1)-\psi(0)=\psi(t_1)-x_0,$ which has length $\xi.$
 By \autoref{linearalgebra1}, there exists a $k_0\in\{1,\dots,d\}$
 such that if $p_0$ is the projection of $v_0$ onto $A(x_0)e_{k_0}$, then $|v_0-p_0|\leq \rho|v_0|.$
 Note, that $|p_0|\leq |v_0|=\xi.$
 By \autoref{Lemma1} there exists $c_2>0$ and a stopping time $T_0\leq t_1/n$ such that for
\begin{align*}
D_1:= \left\{\sup\limits_{s<T_0}|X_s-x_0|<\gamma^{n+1} \text{ and } \sup\limits_{T_0\leq s \leq t_1/n}|X_s - (x_0+p_0)|<\gamma^{n+1} \right\}.
 \end{align*}
the estimate \[\mathbb{P}^{x_0}(D_1)\geq c_2\]
holds.
Since $\gamma<1$ and $\gamma^n\leq \gamma $ for all $n\in\N$, we have for $T_0 \leq s \leq t_1/n$
\begin{equation}\label{psi}
 \begin{aligned}
  |\psi(t_1)-X_s| & \leq|\psi(t_1)-(x_0+p_0)| + |(x_0+p_0)-X_s| \\
  & \leq |v_0-p_0| + \gamma^{n+1}  = \rho \xi + \gamma^{n+1} \leq \widetilde{\rho}\xi
 \end{aligned}
\end{equation}
on $D_1$. Taking $s=t_1/n$, we have
\[ |\psi(t_1)-X_{t_1/n}|\leq \widetilde{\rho}\xi. \]
Since $\widetilde{\rho}<1$ and $|\psi(t_1)-x_0|=|v_0|=\xi,$ then \eqref{psi} shows that on $D_1$
\[ X_s\in B(x_0,2\xi)\subset B(x_0, \varepsilon/2) \quad \text{if } T_0\leq s\leq t_1/n. \]
If $0\leq s<T_0,$ then $|X_s-x_0|<\gamma^{n+1}<\xi,$ and so we have on $D_1$
\[\{ X_s, s\in [0,t_1/n] \}\subset B(x_0,2\xi)\subset B(x_0,\varepsilon/2).\]
Now let $v_1:=\psi(t_1)-X_{t_1/n}.$
When $X_{t_1/n}\in B(x_0, \varepsilon/2),$ then by \autoref{linearalgebra2}, there exists
$k_1\in\{1,\dots,d\}$ such that if $p_1$ is the projection of $v_1$ onto $A(X_{t_1/n})e_{k_1},$ then
$ |v_1-p_1|\leq \rho|v_1|.$
Let $T_1\in [t_1/n, 2t_1/n]$ be a stopping time, determined by \autoref{Lemma1}, and
\begin{align*}
D_2:= \left\{ \sup\limits_{t_1/n\leq s <T_1}|X_s-X_{t_1/n}|<\gamma^{n+1}
\text{ and } \sup\limits_{T_1\leq s \leq 2t_1/n}|X_s - (X_{t_1/n}+p_1)|<\gamma^{n+1} \right\}.
\end{align*}
By the Markov property at the time $t_1/n$ and \autoref{Lemma1}, there exists the same $c_2>0$ such that
\[ \mathbb{P}^{x_0}(D_2|\mathcal{F}_{t_1/n})\geq c_2\]
on the event $\{ X_{t_1/n}\in B(x_0, \varepsilon/2) \}$ and hence especially on $D_1$.
So
\[ \mathbb{P}^{x_0}(D_1\cap D_2) \geq c_2 \mathbb{P}^{x_0}(D_1)\geq c_2^2.\]
Let $s\in[T_1,2t_1/n]$. Then on $D_1\cap D_2$
\begin{align*}
 |\psi(t_1)-X_s| & \leq |\psi(t_1)-(X_{t_1/n}+p_1)|+|(X_{t_1/n}+p_1)-X_s| \\
 & \leq \rho |v_1| + \gamma^{n+1} \leq \rho \widetilde{\rho}\xi+\gamma^{n+1}\leq (\widetilde{\rho})^2 \xi + \gamma^{n+1}\\
 & \leq \widetilde{\rho}^2\xi.
\end{align*}
In particular, by choosing $s=2t_1/n$, we get on $D_1\cap D_2$
\[ |\psi(t_1)-X_{2t_1/n}| \leq (\widetilde{\rho})^2\xi.\]
On $D_1\cap D_2$, we have for $s\in[T_1,2t_1/n]$: $|\psi(t_1)-X_s|<\xi$ and $|\psi(t_1)-x_0|=\xi$, which implies
\[ X_s\in B(x_0,2\xi)\subset B(x_0,\varepsilon/2)\quad \text{ on } D_1\cap D_2.\]
In particular,
\[ |X_{2t_1/n}-x_0|<2\xi \quad \text{ on } D_1\cap D_2.\]
If $s\in[t_1/n,T_1]$, then $|X_s-X_{t_1/n}|<\xi$ and $ |X_{t_1/n}-x_0|<2\xi$ on $D_1\cap D_2$, which yields to
\[X_s\in B(x_0,3\xi)\subset B(x_0,3\varepsilon/4)\quad \text{ on } D_1\cap D_2.\]
Let $v_2:=\psi(t_1)-X_{2t_1/n},$ and proceed as above to get events $D_3,\dots,D_k$ for $k\leq n$. At the k$^{\text{th}}$ stage
\[ \mathbb{P}^{x_0}(D_k | F_{(k-1)t_1/n})\geq c_2 \quad \text{ and so } \quad  \mathbb{P}^{x_0}\left(\bigcap_{j=1}^k D_j\right)\geq c_2^k.\]
For $kt_1/n \leq T_k \leq s \leq (k+1)t_1/n$
\[ |\psi(t_1)-X_s|\leq (\widetilde{\rho})^{k+1}\xi<\xi; \]
on the event $\bigcap_{j=1}^k D_j.$ Since $|\psi(t_1)-x_0|=\xi.$
\[ X_s\in B(x_0,2\xi)\subset B(x_0,\varepsilon/2)\quad \text{on } \bigcap_{j=1}^k D_j.\]
If $kt_1/n \leq s < T_k$, we obtain at the k$^{\text{th}}$ stage
\[ |X_{kt_1/n}-x_0|<\varepsilon/2 \quad \text{on } \bigcap_{j=1}^k D_j.\]
Thus
\[ |X_s-x_0|\leq |X_s-X_{kt_1/n}|+|X_{kt_1/n}-\psi(t_1)|+|\psi(t_1)-x_0|\leq \gamma^{n+1}+\xi+\xi< 3\xi, \]
and therefore $X_s\in B(x_0,3\xi)\subset B(x_0,3\varepsilon/4).$
We continue this procedure $n$ times to get events $D_1,\dots,D_n$.
On $\bigcap_{k=1}^n D_k$, we have
\begin{enumerate}
 \item $X_s\in B(x_0,3\xi)$ for $ s\leq t_1$,
 \item $|X_{t_1}-\psi(t_1)|<(\widetilde{\rho})^n\xi<(\widetilde{\rho})^n<\gamma, $ and
 \item $\displaystyle\mathbb{P}^{x_0}\left(\bigcap_{j=1}^n D_j\right)\geq c_2^n.$
\end{enumerate}
For $s\in[0,t_1]$,
\[ |X_s-\psi(s)|\leq |X_s-x_0|+|x_0-\psi(s)|<3\xi+\xi< \frac{3}{4}\varepsilon+ \frac{1}{4}\varepsilon=\varepsilon \quad \text{ on } \bigcap_{j=1}^n D_j. \]
Hence
\[ \mathbb{P}^{x_0}\left( \sup\limits_{s\leq t_1} |X_s - \psi(s)|<\varepsilon \text{ and } |X_{t_1}-\psi(t_1)|<\gamma \right) \geq \mathbb{P}^{x_0}\left(\bigcap_{j=1}^n D_j\right)\geq c_2^n=: c_1.\]
\end{proof}
We can now prove an important theorem, which will be the main ingredient in the proof of the H\"older regularity. It states that the solution to \eqref{system of equations} stays with positive probability in 
a $\varepsilon$-tube around a given continuous function.
\begin{theorem}\label{support-theorem}
Let $r\in(0,1], x_0\in\R^d, \varepsilon\in(0,r^{\alpha_{\max}/\alpha_{\min}}), t_0>0$ and $x_0\in\R^d$.
Let $\varphi:[0,t_0]\to\R^d$ be continuous with $\varphi(0)=x_0$ and the image of $\varphi$ contained in $M_r(x_0).$
Then there exists $c_1=c_1(\Lambda(M^2_r(x_0)),\m(M^3_r(x_0)),\varphi,\varepsilon, t_0)>0$ such that
 \[ \mathbb{P}^{x_0}\left( \sup\limits_{s\leq t_0} |X_s - \varphi(s)|<\varepsilon \right) > c_1. \]
\end{theorem}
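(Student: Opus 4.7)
The plan is to approximate $\varphi$ by a piecewise linear path $\psi$ whose segments are short enough for \autoref{support2} to apply, and then iterate \autoref{support2} along these segments via the strong Markov property.

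First I would use uniform continuity of $\varphi$ on $[0,t_0]$ to choose a partition $0 = s_0 < s_1 < \cdots < s_N = t_0$ fine enough that the oscillation of $\varphi$ on each $[s_{k-1},s_k]$ is less than $\varepsilon/8$. Then I define $\psi:[0,t_0]\to\R^d$ by linearly interpolating $\varphi$ between the partition points, so that $\sup_{s\in[0,t_0]}|\psi(s)-\varphi(s)|<\varepsilon/4$ and each segment $\psi\big|_{[s_{k-1},s_k]}$ has length $\xi_k<\varepsilon/8 < \varepsilon/4$, as required by \autoref{support2}. Note that the image of $\psi$ is contained in a slight enlargement of the convex hull of $\varphi([0,t_0])\subset M_r(x_0)$; after shrinking the partition further if necessary I may assume this enlargement lies inside $M_r^{2}(x_0)$, so the uniform bounds $\Lambda(M_r^2(x_0))$ and $\m(M_r^2(x_0))$ apply throughout.

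Next I iterate. Set $\gamma := \varepsilon/(4N)$, which is the required end-point accuracy at each stage. On $[0,s_1]$, \autoref{support2} applied with starting point $x_0$, line segment $\psi\big|_{[0,s_1]}$, tube width $\varepsilon/2$, and accuracy $\gamma$, yields a constant $c^{(1)}>0$ such that with probability at least $c^{(1)}$ the process stays within $\varepsilon/2$ of $\psi(s)$ for $s\le s_1$ and satisfies $|X_{s_1}-\psi(s_1)|<\gamma$. Conditioning on this event, I apply the strong Markov property at $s_1$ and use \autoref{support2} again on $[s_1,s_2]$ with starting point $X_{s_1}$ and the translate of $\psi\big|_{[s_1,s_2]}$ starting from $X_{s_1}$; the resulting segment differs from $\psi$ by at most $\gamma<\varepsilon/4$, so following a tube of width $\varepsilon/2$ around it keeps us inside an $\varepsilon$-tube around $\psi$ (and thus an $(5\varepsilon/4)$-tube around $\varphi$; by redefining $\varepsilon$ smaller at the outset, say working with $\varepsilon'=\varepsilon/2$, this stays below $\varepsilon$). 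Iterating $N$ times and collecting the constants gives
\[
\mathbb{P}^{x_0}\Bigl(\sup_{s\le t_0}|X_s-\psi(s)|<\varepsilon/2 \text{ and } |X_{t_0}-\psi(t_0)|<\gamma\Bigr)\ \ge\ \prod_{k=1}^N c^{(k)}.
\]
Combined with $|\psi-\varphi|<\varepsilon/2$, this bounds $\sup_{s\le t_0}|X_s-\varphi(s)|<\varepsilon$ from below by a positive constant $c_1$.

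The main obstacle is ensuring the $N$ constants $c^{(k)}$ are bounded below by a single positive number, that is, that the constant produced by \autoref{support2} can be taken uniform across all iterations. This is possible because \autoref{support2} depends on the starting point only through $\Lambda$ and $\m$ on $M_r^{2}$ of the starting point, and by the tube construction every starting point $X_{s_{k-1}}$ used in the iteration lies within a fixed enlargement of $M_r(x_0)$ contained in $M_r^{2}(x_0)$; hence the same $\Lambda(M_r^2(x_0))$ and $\m(M_r^3(x_0))$ work at every stage, together with $\varepsilon,t_0/N$ and the common $\xi < \varepsilon/8$. A minor bookkeeping issue is that reducing the initial $\varepsilon$ by a fixed factor (to absorb the $\varepsilon/2$ vs.\ $\varepsilon$ discrepancy between $\psi$-tube and $\varphi$-tube) only changes the constant $c_1$, not the form of the conclusion.
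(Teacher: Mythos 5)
Your proposal is correct and follows essentially the same approach as the paper: approximate $\varphi$ by a polygonal path of short segments, apply \autoref{support2} to each segment, and iterate via the strong Markov property, with the uniformity of the constant across stages guaranteed because $\Lambda$ and $\m$ are controlled over a fixed enlargement of $M_r(x_0)$. Your choice of end-point accuracy $\gamma=\varepsilon/(4N)$ and the use of translated segments are minor bookkeeping variants of the paper's use of $\varepsilon/(4\sqrt d)$; both keep the accumulated offset below $\varepsilon/4$ and deliver the same conclusion.
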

\begin{proof}
Let $\varepsilon>0.$ We define
\[ U:=\{x\in\R^d : \exists s\in[0,t_0] \text{ such that } |x-\varphi(s)|<\varepsilon/2  \}\]
and approximate $\varphi$ within $U$ by a polygonal path.
Hence we can assume that $\varphi$ is polygonal by changing $\varepsilon$ to $\varepsilon /2$ in the assertion.
We subdivide $[0,t_0]$ into $n$ subintervals of the same length for $n\geq 2$ such that
\[r:= L\left( \varphi\left(\left( \frac{kt_0}{n},\frac{(k+1)t_0}{n} \right)\right) \right)<\frac{\varepsilon}{4},\]
where $L$ denotes the length of the line segment. Let
\[D_k := \left\{ \sup\limits_{(k-1)t_0/n \leq s \leq kt_0 / n} |X_s-\varphi(s)|<\frac{\varepsilon}{2} \text{ and } |X_{kt_0/n}-\varphi(kt_0/n)|<\frac{\varepsilon}{4\sqrt{d}} \right\}.\]
Using \autoref{support2}, there exists a constant $c_2>0$ such that
\[ \mathbb{P}^{x_0} (D_1)\geq c_2.\]
By the strong Markov property at time $t_0/n$ we get
\[\mathbb{P}^{x_0} (D_2|\mathcal{F}_{t_0/n})\geq c_2.\]
Using the Iteration as in the proof of \autoref{support2}, we get for all $k\in\{1,\dots,d\}$
\[ \mathbb{P}^{x_0} (D_k|\mathcal{F}_{(k-1)t_0/n})\geq c_2 \quad \text{and} \quad \mathbb{P}^{x_0} \left( \bigcap_{k=1}^n D_k \right)\geq c_2^n. \]
Hence the assertion follows by
\begin{align*}
 \mathbb{P}^{x_0}\left( \sup\limits_{s\leq t_0} |X_s - \varphi(s)|<\frac{\varepsilon}{2} \right) \geq \mathbb{P}^{x_0}\left(\bigcap_{k=1}^n D_k\right)\geq c_2^n=c_1.
\end{align*}
 \end{proof}
We state two corollaries, which follow immediately from \autoref{support-theorem}. 
 \begin{corollary}\label{support-corollary1}
 Let $r\in(0,1]$, $\varepsilon\in (0,r^{\alpha_{\max}/\alpha_{\min}}/4)$, $k=1-(\varepsilon/r^{\alpha_{\max}/\alpha_{\min}})$, $\delta\in(\varepsilon,r^{\alpha_{\max}/\alpha_{\min}}/2)$ and $x_0\in\R^d$. 
 Moreover let $Q:=M_r(x_0), Q':=M_r^{k}(x_0)$ and $y\in\R^d$ such that $R:=M_r^{\delta}(y)\subset Q'$.
 There exists $c_1=c_1(\Lambda(Q),\m(Q),\varepsilon, \delta)>0$  such that
 \[ \mathbb{P}^{x}(T_R<\tau_Q)\geq c_1, \quad x\in Q'. \]
\end{corollary}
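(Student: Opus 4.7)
The approach is a standard support-theorem argument: I would force the process $X$ to follow a prescribed continuous path from the starting point $x$ into $R$ while avoiding $\partial Q$, and then bound that event below by \autoref{support-theorem}.

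First I would construct, for each $x\in Q'$, the linearly parametrized straight-line segment $\varphi_x\colon[0,t_0]\to\R^d$ from $\varphi_x(0)=x$ to $\varphi_x(t_0)=y$. Since $Q'$ is a Cartesian product of intervals, it is convex, hence $\varphi_x\subset Q'$; moreover the Euclidean length of $\varphi_x$ is at most $\diam(Q')$, a quantity depending only on $r$, which lets me fix one time horizon $t_0>0$ for all starting points $x$.

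Next I would calibrate a Euclidean tube radius $\varepsilon'>0$ to enforce two geometric inclusions: (a) any $z$ with $|z-\varphi_x(s)|<\varepsilon'$ lies in $Q$ whenever $\varphi_x(s)\in Q'$, and (b) the Euclidean ball $B(y,\varepsilon')$ lies in $R$. Coordinatewise, (a) requires $\varepsilon'\leq(1-k)r^{\alpha_{\max}/\alpha_i}$ for every $i$, whose worst case at $\alpha_i=\alpha_{\min}$ is simply $\varepsilon$, and (b) requires $\varepsilon'\leq\delta r^{\alpha_{\max}/\alpha_i}$, whose worst case is $\delta r^{\alpha_{\max}/\alpha_{\min}}$. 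Therefore the choice $\varepsilon':=\tfrac12\min(\varepsilon,\delta r^{\alpha_{\max}/\alpha_{\min}})$ suffices. Applying \autoref{support-theorem} at $x$ to $\varphi_x$ with tube size $\varepsilon'$ (and with the support theorem's own radius parameter taken slightly larger than $r$, so that the cube around $x$ used there engulfs $Q'$) then yields a positive constant such that
\[\mathbb{P}^{x}\!\left(\sup_{s\leq t_0}|X_s-\varphi_x(s)|<\varepsilon'\right)\geq c_1.\]
On this event, (a) gives $X_s\in Q$ for every $s\leq t_0$, so $\tau_Q>t_0$, and (b) at $s=t_0$ gives $X_{t_0}\in R$, hence $T_R\leq t_0<\tau_Q$, which is the claim.

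The main obstacle I anticipate is ensuring that $c_1$ is truly uniform in $x\in Q'$: the constant in \autoref{support-theorem} depends on the path $\varphi_x$ through the number of polygonal subsegments its proof introduces (which is governed by the length of $\varphi_x$), on the starting point $x$ through $\Lambda$ and $\varpi$ on enlarged cubes centered at $x$, and on the tube size. Here the length of $\varphi_x$ is dominated by $\diam(Q')$ independently of $x$, the coefficient data can be controlled on a single fixed enlargement of $Q$, and both $\varepsilon'$ and $t_0$ have been chosen independently of $x$, so these implicit dependencies collapse into the advertised constants $\Lambda(Q),\varpi(Q),\varepsilon,\delta$.
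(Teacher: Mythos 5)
Your proposal matches the paper's own proof: both run \autoref{support-theorem} along a path inside $Q'$ from $x$ to $y$, with a tube small enough that the tube stays in $Q$ (so $\tau_Q>t_0$) and the terminal position lands in $R$ (so $T_R\leq t_0$). Your explicit choice $\varepsilon'=\tfrac12\min(\varepsilon,\delta r^{\alpha_{\max}/\alpha_{\min}})$ is in fact slightly more careful than the paper's use of tube radius $\varepsilon$, since for $r<1$ and $\delta$ close to $\varepsilon$ one can have $\varepsilon>\delta r^{\alpha_{\max}/\alpha_{\min}}$, in which case $B(y,\varepsilon)\not\subset R$ and the terminal-entry step needs exactly the reduction you made.
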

\begin{proof}
Note, that
\[ \dist(\partial Q, \partial Q') = \left|r^{\alpha_{\max}/\alpha_i} - kr^{\alpha_{\max}/\alpha_i}\right| = |\varepsilon \frac{r^{\alpha_{\max}/\alpha_i}}{r^{\alpha_{\max}/\alpha_{\min}}}|\geq \varepsilon. \]
  Let $x\in Q'$ be arbitrary and $\varphi:[0,t_0]\to \R^d$
  be a polygonal path such that $\varphi(0)=x$ and $\varphi(t_0)=y$ and the image of $\varphi$ is contained in $Q'$.
 Then the assertion follows by \autoref{support-theorem} and
\[ \mathbb{P}^x \left( \sup\limits_{s\leq t_o} |X_s-\varphi(s)|<\varepsilon \right)\leq \mathbb{P}^x (T_R<\tau_Q).\]
 \end{proof}
\begin{corollary}\label{support-corollary2}
Let $r\in(0,1]$, $x_0\in\R^d$ and $\varepsilon\in(0,r^{\alpha_{\max}/\alpha_{\min}}/4).$ 
For $x\in M_r(x_0)$, we define $R:=M_s(x)$ such that $R\subset M_r(x_0)=:M$ and $\dist(\partial R, \partial M)>\varepsilon$.
Then there exists $\xi=\xi(\varepsilon, \Lambda(M), r, \m(M)\in(0,1)$ such that 
\[ \mathbb{P}^y(T_{R}<\tau_{M})\geq \xi=\xi(\epsilon).\]
for all $y\in M$ with $\dist(y,\partial M)>\varepsilon$.
 \end{corollary}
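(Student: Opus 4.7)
The plan is to reduce to the support theorem (\autoref{support-theorem}) in the same spirit as the proof of \autoref{support-corollary1}, by constructing a continuous path from $y$ to a point of $R$ whose Euclidean neighborhood lies inside $M$.

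First, since $M = M_r(x_0)$ is a product of open intervals, it is convex and the map $z \mapsto \dist(z, \partial M)$ is concave on $M$. The hypothesis $\dist(y, \partial M) > \varepsilon$ together with the fact that the center $x$ of $R$ satisfies $\dist(x, \partial M) > \varepsilon$ (a direct consequence of $\dist(\partial R, \partial M) > \varepsilon$) implies, by concavity, that the straight line segment from $y$ to $x$ stays at Euclidean distance strictly greater than $\varepsilon$ from $\partial M$. Let $\varphi : [0, t_0] \to \R^d$ be an affine parametrization of this segment with $\varphi(0) = y$ and $\varphi(t_0) = x$.

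Next, I would choose $\varepsilon' \in (0, \varepsilon)$ small enough that the Euclidean $\varepsilon'$-neighborhood of $\varphi([0,t_0])$ is contained in $M$ and that $B_{\varepsilon'}(x) \subset R$; the latter requires $\varepsilon' < s^{\alpha_{\max}/\alpha_{\min}}$. Applying \autoref{support-theorem} to $\varphi$ with deviation $\varepsilon'$ yields a constant $c_1 > 0$ with
\[ \mathbb{P}^y\Bigl(\sup_{0 \leq u \leq t_0}|X_u - \varphi(u)| < \varepsilon'\Bigr) \geq c_1. \]
On this event, $X_u \in M$ for all $u \in [0, t_0]$, so $\tau_M > t_0$, and $X_{t_0} \in B_{\varepsilon'}(x) \subset R$, so $T_R \leq t_0$. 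Consequently $\mathbb{P}^y(T_R < \tau_M) \geq c_1$, and we set $\xi := c_1$.

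The delicate point is verifying that the resulting $\xi$ has the advertised uniform dependence and in particular does not depend on the specific $y$ or on the specific admissible $R$. Since the constant from \autoref{support-theorem} depends on $\varphi$, $\varepsilon'$ and $t_0$, this is achieved by selecting $\varphi$ canonically (parametrizing the segment over a fixed interval $[0,t_0]$ whose length depends only on $r$ through $\diam(M)$) and controlling the number of subdivisions in the iterative construction behind \autoref{support-theorem} in terms of $\varepsilon$, $\Lambda(M)$, $\m(M)$, and the size $s$ of $R$. This is the step one has to handle with some care; the rest of the argument is essentially a verbatim rewriting of the strategy used in \autoref{support-corollary1}.
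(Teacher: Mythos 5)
Your argument is essentially the paper's: the paper's one-line proof simply invokes \autoref{support-corollary1}, which is itself the ``straight path plus support theorem'' argument that you spell out directly, so both routes reduce to \autoref{support-theorem} in the same way. The uniformity caveat you flag at the end (the constant from \autoref{support-theorem} depends on the tube width $\varepsilon'$, which in your construction must be taken smaller than $s^{\alpha_{\max}/\alpha_{\min}}$ and hence degenerates as the target rectangle $R=M_s(x)$ shrinks) is a genuine loose end in the paper's own formulation of the corollary, not a gap you introduced, so your proposal matches the paper's proof in both strategy and precision.
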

\begin{proof}
Follows immediately by \autoref{support-corollary1}.
 \end{proof}
 We now prove the main ingredient for the proof of the H\"older regularity. It states, that sets of positive Lebesgue measure are hit with positive probability.
 \begin{theorem}\label{supportphi}
 Let $r\in(0,1]$, $x_0\in\R^d$ and $M:=M_r(x_0).$
 There exists a nondecreasing function $\varphi:(0,1)\to(0,1)$ such that
 \[ \mathbb{P}^x (T_A<\tau_{M})\geq \varphi(|A|) \]
 for all $x\in M_r^{1/2}(x_0)$ and all $A\subset M$ with $|A|>0$.
\end{theorem}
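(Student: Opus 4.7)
The plan follows the Krylov--Safonov support-theorem strategy of \cite{BCR}: apply \autoref{constructD} to decompose $A$ into rectangles in which $A$ is ``dense,'' push the process into one such rectangle via the support corollaries, and then invoke the L\'evy system to force a jump into $A$ before $\tau_M$. The function $\varphi$ will arise as a product of the corresponding lower bounds.

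First, I would apply \autoref{constructD} with $q\in(0,1)$ chosen so that $|A|<q$, producing pairwise disjoint rectangles $R_i$ satisfying $|A\cap R_i|>q|R_i|$ and $|A|\leq|D\cap M|$, where $D=\bigcup_i\widehat{R_i}$. Since the rectangles have disjoint interiors and $|A\cap R_i|>q|R_i|$, the total measure $\sum_i|R_i|$ is bounded by $|A|/q$, so one can fix a single rectangle $R_{i_0}$ whose size and position are quantitatively controlled by $|A|$ alone. From $x\in M_r^{1/2}(x_0)$ one then constructs a polygonal path inside $M_r^{1-\varepsilon}(x_0)$ from $x$ to the center of $R_{i_0}$, of length and tube-width controlled by $|A|$, and applies \autoref{support-corollary1} around it to obtain
\[ \mathbb{P}^x\bigl(T_{\widehat{R_{i_0}}}<\tau_M\bigr)\;\geq\;p_1(|A|)\;>\;0. \]

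The heart of the argument is then to bound from below, for $y\in\widehat{R_{i_0}}$, the probability $\mathbb{P}^y(T_A<\tau_M)$. By \autoref{Levy-system}, the expected number of jumps of $X$ into $A$ before exiting $M$ equals
\[ \mathbb{E}^y\int_0^{\tau_M}J_A(X_s)\,ds,\qquad J_A(z):=\sum_{k=1}^d|(a_k^\tau(z))^{-1}|^{1+\alpha_k}c_{\alpha_k}\int_{\R}\mathbf{1}_A\bigl(z+(h-z_k)e_k\bigr)|h-z_k|^{-1-\alpha_k}\,dh. \]
Because the L\'evy measure is axis-supported, $J_A(z)>0$ only when some coordinate line through $z$ meets $A$ in a set of positive one-dimensional measure. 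Applying Fubini to $|A\cap R_{i_0}|>q|R_{i_0}|$ yields, for some coordinate $k_0$, a set $S\subset\widehat{R_{i_0}}$ of positive measure on which $J_A(z)\geq c'(|A|)>0$. Combining this with a lower bound on the expected time $X$ spends in $S$---obtained by iterating \autoref{support-corollary2} in conjunction with \autoref{exittime}---and a standard Chebyshev-type conversion yields $\mathbb{P}^y(T_A<\tau_M)\geq p_2(|A|)>0$. The strong Markov property at time $T_{\widehat{R_{i_0}}}$ then assembles $\varphi(|A|):=p_1(|A|)\,p_2(|A|)$, replaced by its nondecreasing minorant if necessary.

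The main obstacle is the third step. The axis-supported L\'evy kernel means that merely being spatially close to $A$ does not generate a jump into $A$; one must quantify via Fubini how much of $A$ is visible along coordinate lines through positions actually visited by $X$, and then combine this visibility bound with a sojourn-time estimate at those ``good'' positions. Once this is done, the remaining estimates follow the standard Krylov--Safonov assembly.
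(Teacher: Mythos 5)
Your plan is a \emph{direct} lower bound on the hitting probability, obtained by (i) landing in a rectangle $R_{i_0}$ of the \autoref{constructD}-decomposition via the support corollaries and then (ii) invoking the L\'evy system to manufacture a jump into $A$. This is genuinely different from the paper's proof, which is the Krylov--Safonov \emph{contradiction} argument: one defines $\varphi(\varepsilon)$ as the infimum of $\mathbb{P}^y(T_A<\tau_{M_R(z_0)})$ over all cubes $M_R(z_0)$, all $y$ in the inner half, and all $A$ of relative density $\geq\varepsilon$; observes via \autoref{support-corollary1} that $\varphi(\varepsilon)>0$ when $\varepsilon$ is close to $1$; and then, supposing $q_0:=\inf\{\varepsilon:\varphi(\varepsilon)>0\}>0$, uses \autoref{constructD} to produce rectangles $R_i$ in which $A$ has density $q>q_0$, applies the (known positive) bound $\varphi(q)$ recursively \emph{inside} each $R_i$, chains with the strong Markov property, and contradicts the near-optimality of the test configuration. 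The self-referential use of $\varphi$ inside the smaller rectangles is the whole point: it eliminates any need to estimate how much time $X$ spends near $A$.

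Your version cannot avoid that sojourn-time estimate, and that is where the gap is. To turn the L\'evy-system identity into a lower bound $\mathbb{P}^y(T_A<\tau_M)\geq p_2(|A|)$ you must bound $\mathbb{E}^y\int_0^{\tau_M}\mathbf{1}_S(X_s)\,ds$ from below, where $S$ is the set of positions that ``see'' $A$ along some coordinate line with one-dimensional density bounded below. But $S$ is only known to have positive Lebesgue measure; it is in general a union of axis-parallel cylinders with irregular cross-section, not a rectangle. \autoref{support-corollary2} gives hitting probabilities for \emph{rectangles}, and \autoref{exittime} gives an \emph{upper} bound on exit times; neither, alone or ``iterated,'' yields a lower bound on the occupation time of an arbitrary positive-measure set. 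Such an occupation-time estimate is essentially a lower Green-function bound, which is exactly the kind of statement the highly singular, axis-supported kernel makes unavailable (the paper even points out that the Harnack inequality fails in this setting). In fact, proving $\mathbb{P}^y(T_A<\tau_M)\geq p_2(|A|)$ for $y\in\widehat{R_{i_0}}$ is just the theorem itself at a smaller scale, so a non-recursive proof of it is circular. A secondary problem: the rectangles produced by \autoref{constructD} may be arbitrarily small regardless of $|A|$, so the tube cost $p_1$ in your first step is not a function of $|A|$ alone. The Krylov--Safonov scheme sidesteps both issues, because the infimum defining $\varphi$ already ranges over all scales and the recursion descends in the density $q$, not in size.
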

\begin{proof}
 We will follow the proof of Theorem V.7.4 in \cite{BAD}. Set
 \begin{align*}
 \varphi(\varepsilon)=\inf \left\{  \mathbb{P}^y (T_A<\tau_{M_R(z_0)}) : z_0\in\R^d, R>0, y\in M_R^{1/2}(z_0), |A|\geq \varepsilon |M_R(z_0)|, A\subset M_R(z_0). \right\}
 \end{align*}
 and
 \[q_0:=\inf\{ \varepsilon : \varphi(\varepsilon)>0 \}.\]
For $\epsilon$ sufficiently large, we know by \autoref{support-corollary1} $\varphi(\epsilon)>0$.
 We suppose $q_0>0$, and we will obtain our contradiction. \newline
 Since $q_0<1$, we can choose $1>q>q_0$ such that $ (q+q^2)/2<q_0$. Moreover let
 \[\eta:= (q-q^2)/2, \quad \beta:= \left(\frac{2^{1-d} q r^{-\sum_{i=1}^d (\alpha_{\max}/\alpha_i)}}{q+1}\right)^{1/d} \quad \text{and} \quad \rho=\xi((1-\beta)r^{\alpha_{\max}/\alpha_{\min}}/6),\] 
 where $\xi$ is defined as in \autoref{support-corollary2}. \newline
 Let $z\in\R^d, R\in(0,1]$, $x\in M_R^{1/2}(z)$ and $A\subset M_R(z)$ such that
 \begin{equation} \label{qeta}
 q-\eta < \frac{|A|}{|M_R(z)|}<q \ \text{ and } \ \mathbb{P}^x(T_A<\tau_{M_R(z)})<\rho\varphi(q)^2. 
 \end{equation}
Without loss of generality, set $R=r$ and $z=x_0$. Hence
\[  \mathbb{P}^x(T_A<\tau_{M_r(x_0)})<\rho\varphi(q)^2. \]
By \autoref{constructD} there exists a set $D\subset M_r(x_0)$ such that
 \[ |A|\leq q|D\cap M_r(x_0)|. \]
Since $|A|>(q-\eta)|M_r(x_0)|$, 
\[ |D\cap M_r(x_0)| \geq \frac{|A|}{q}>\frac{(q-\eta)|M_r(x_0)|}{q} = \frac{(q+1)|M_r(x_0)|}{2}.\]
Define $E=D\cap M_r^\beta(x_0)$. Since
\[ \frac{(q+1)|M_{r}^\beta(x_0)|}{2}=\frac{(q+1)2^d \beta^d r^{\sum_{i=1}^d \frac{\alpha_{\max}}{\alpha_i}}}{2} = q, \]
we get $|E|>q.$ By the definition of $\varphi$, we have $\mathbb{P}^x(T_{E}<\tau_{M_r(x_0)})\geq \varphi(q).$\\
We will first show
\begin{equation}
 \mathbb{P}^y (T_A<\tau_{M_r(x_0)})\geq \rho\varphi(q) \quad \text{for all } y\in E.
\end{equation}
Let $y\in\partial E$, then $y\in\widehat{R}_i$ for some $R_i\in\mathcal{R}$ and $\dist(y,\partial M_r(x_0))\geq (1-\beta)r^{\alpha_{\max}\alpha_{\min}}$.
Define $R_i^*$ as the cube with the same center as $R_i$ but sidelength half as long. 
By \autoref{support-corollary2}
\[ \mathbb{P}^y (T_{R_i^*}<\tau_{M_r(x_0)})\geq \rho. \]
By \autoref{constructD} (3) for all $R_i\in\mathcal{R}$ 
\[|A\cap R_i|\geq q|R_i|\]
and therefore
\[\mathbb{P}^{x_0}(T_{A\cap R_i}<\tau_{M_r(x_0)})\geq \varphi(q) \quad \text{ for } x_0\in R_i^*.\]
Using the strong Markov property, we have for all $y\in E$
\begin{align*}
 \mathbb{P}^{y}(T_A<\tau_{M_r(x_0)}) & \geq \mathbb{E}^{y} \left[\mathbb{P}^{X_{T_{R_i^*}}}(T_A<\tau_{R_i}); T_{R_i^*}<\tau_{M_r(x_0)}\right] \\
 & \geq \rho \varphi(q).
\end{align*}
Now we get our contradiction by
\begin{align*}
 \mathbb{P}^x(T_A<\tau_{M_r(x_0)}) & \geq \mathbb{P}^x (T_{E}<T_A<\tau_{M_r(x_0)}) \\
 & \geq \mathbb{E}^x \left[\mathbb{P}^{X_{T_{E}}}(T_A < \tau_{M_r(x_0)});T_{E}<\tau_{M_r(x_0)}\right] \\
 & \geq \rho \varphi(q) \mathbb{P}^x \left[T_{E}<\tau_{M_r(x_0)}\right] \geq \rho \varphi(q)^2.
\end{align*}
 \end{proof}
\section{Proof of \autoref{hoelder-reg}}\label{sectionproof}
In this section we prove our main result.
\begin{proof}
 Let $S:=M_s(y)\subset M_r(x_0)$ and $A\subset S$ such that $3|A|\geq |S|.$ Those sets will be specified later.
 Set $k=1-(\varepsilon / r^{\alpha_{\max}/\alpha_{\min}})$ and $S':=M_s^k(y)$, 
 where $\varepsilon$ is chosen such that $6|S\setminus S'|=|S|.$ Then
\[ 6|A\cap S'| \geq |S|. \]
Let $\mathcal{R}$ be a collection of $N$ equal sized rectangles as in \autoref{rectangles} with disjoint interiors
and $\mathcal{R}\subset S$. Moreover let $\mathcal{R}$ to be a covering of $S'$.
For at least one rectangle $Q\in\mathcal{R}$
\[ 6|A\cap S' \cap Q|\geq  |Q|. \]
Let $Q'$ be the rectangle with the same center as $Q$ but each sidelength half as long. 
By \autoref{support-corollary1} there exists a $c_2>0$ such that
\begin{equation}
 \mathbb{P}^x(T_{Q'}<\tau_S) \geq c_2, \quad x\in M_s^{1/2}(y).
\end{equation}
Using \autoref{supportphi} and the strong Markov property there exists a constant $c_3>0$ with
\begin{equation}\label{enter-exit1}
 \mathbb{P}^x(T_A<\tau_S)\geq c_3, \quad x\in M_s^{1/2}(y).
\end{equation}
Let $R\geq 2r$. By \autoref{enter-exit} there exists a $c_4>0$ such that
 \[ \mathbb{P}^z \left( X_{\tau_{M_r(x_0)}}\notin M_R(x_0) \right)\leq c_4 \left( \frac{r}{R} \right)^\alpha_{\max} \quad \text{for all } z\in M_r(x_0). \]
Let
\[\gamma:= (1-c_3), \quad \text{ } \quad \rho:=\left( \frac{c_3\gamma^2}{4c_4} \right)^{1/\alpha_{\max}}\wedge \left( \frac{\gamma}{2} \right)^{1/\alpha_{\max}} \quad \text{and} \quad \beta:= \frac{\log(\gamma)}{\log(\rho)}. \]
By linearity it suffices to suppose $0\leq h \leq M$ on $\R^d.$ 
We first consider the case $r=1$. \newline
Let $M_i = M_{\rho^i}(x_0)$ and $\tau_i = \tau_{M_i}.$
We will show that for all $k\in\N_{0}$
\begin{equation}\label{boundedosc}
 \osc\limits_{M_k} h := \sup\limits_{M_k} h - \inf\limits_{M_k} h \leq M\gamma^k.
\end{equation}
To shorten notation, we set $a_i = \inf\limits_{M_i} h$ and $b_i = \sup\limits_{M_i} h.$
Assertion \eqref{boundedosc} will be will be proved by induction.
Let $k\in\N$ be arbitrary but fixed. We suppose $b_i-a_i\leq M\gamma^i$ for all $i\leq k$; then we need to show
\begin{equation}\label{induction}
b_{k+1}-a_{k+1}\leq M\gamma^{k+1}.
\end{equation}
By definition $M_{k+1}\subset M_k$ and therefore in particular $a_k\leq h \leq b_k$ on $M_{k+1}$. 
Set 
\[ A'=\{ z\in M_{k+1} : h(z)\leq (a_k+b_k)/2 \}.\]
Without loss of generality, assume $ 2|A'|\geq |M_{k+1}|. $ 
If this assumption does not hold, we consider $M-h$ instead of $h$. 
Let $A\subset A'$ be compact such that $3|A|\geq |M_{k+1}|.$ 
By \eqref{enter-exit1} there exists a $c_3>0$ such that $\mathbb{P}^y (T_A<\tau_k)\geq c_3$ for all $y\in M_{k+1}$.  

Let $\varepsilon>0$ and $y,z\in M_{k+1}$ such that $h(y)\geq b_{k+1}-\varepsilon$ and $h(z)\leq a_{k+1}+\epsilon$.\\
Since $h$ is harmonic, $h(X_t)$ is a martingale. We get by optimal stopping
\begin{align*}
 h(y)-h(z) = & \mathbb{E}^y \left( h(X_{T_A})-h(z) ; \tau_k>T_A \right) \\
 & + \mathbb{E}^y \left( h(X_{\tau_k}) - h(z) ; \tau_k < T_A, X_{\tau_k}\in M_{k-1} \right) \\
 & + \sum_{i=1}^\infty \mathbb{E}^y \left( h(X_{\tau_k}) - h(z) ; \tau_k < T_A, X_{\tau_k}\in M_{k-i-1}\setminus M_{k-i} \right).
\end{align*}
We will now study these three components on the right hand side seperately. 
Note $h(z)\geq a_k \geq a_{k-i-1}$ for all $i\in\N$
\begin{enumerate}
 \item In the first component, $X$ enters $A\subset A'$ before leaving $M_k$. Hence
\begin{align*}
 \mathbb{E}^y & ( h(X_{T_A})-h(z) ; T_A<\tau_k) \leq  \mathbb{E}^y \left( \frac{a_k+b_k}{2} - a_k; T_A<\tau_k \right) \\
 & = \frac{b_k-a_k}{2}\mathbb{P}^y (T_A<\tau_k) \leq \frac{M\gamma^k}{2} \mathbb{P}^y (T_A<\tau_k).
\end{align*}
\item In the component $X$ leaves $M_k$ before entering $A$. 
While leaving $M_k$, $X$ does not make a big jump in the following sense: $X$ is at time $\tau_k$ in $M_{k-1}$. 
Hence in this case $h(X_{\tau_k})\leq b_{k-1}.$ This yields to
\begin{align*}
 \mathbb{E}^y & \left( h(X_{\tau_k}) - h(z) ; \tau_k < T_A, X_{\tau_k}\in M_{k-1} \right) \leq \mathbb{E}^y \left( b_{k-1}-a_{k-1} ; \tau_k < T_A, X_{\tau_k}\in M_{k-1} \right) \\
& = (b_{k-1}-a_{k-1}) \mathbb{P}^y(\tau_k < T_A) \leq M \gamma^{k-1} (1-\mathbb{P}^y(T_A<\tau_k)). 
\end{align*}
\item In the third component $X_{\tau_k}\in M_{k-i-1}$ for $i\in\N$. Therefore
$h(\tau_k)\leq b_{k-i-1}$.
\begin{align*}
 \sum_{i=1}^\infty &\mathbb{E}^y  \left( h(X_{\tau_k}) - h(z) ; \tau_k < T_A, X_{\tau_k}\in M_{k-i-1}\setminus M_{k-i} \right) \\
 &\leq \sum_{i=1}^\infty \mathbb{E}^y \left( (b_{k-i-1}-a_{k-i-1}) ; \tau_k < T_A, X_{\tau_k}\in M_{k-i-1}\setminus M_{k-i} \right) \\
 & = \sum_{i=1}^\infty (b_{k-i-1}-a_{k-i-1})\mathbb{P}^y(X_{\tau_k}\notin M_{k-i}) 
  \leq \sum_{i=1}^\infty M \gamma^{k-i-1} c_4 \left(\frac{\rho^k}{\rho^{k-i}}\right)^{\alpha_{\max}} \\ 
 & = c_4 M \gamma^{k-1}\sum_{i=1}^\infty \left( \frac{\rho^{\alpha_{\max}}}{\gamma}\right)^i 
  = c_4 M \gamma^{k-1}\frac{\rho^{\alpha_{\max}}}{\gamma-\rho^{\alpha_{\max}}} \\
 & \leq 2c_4 \gamma^{k-2} M \rho^{\alpha_{\max}} 
  \leq \frac{c_3M\gamma^k}{2}, 
 \end{align*}
where we used 
\[ \frac{\rho^{\alpha_{\max}}}{\gamma} \leq \frac{1}{2} \quad \text{and} \quad 
 \rho \leq \left( \frac{\gamma}{2} \right)^{1/\alpha_{\max}} \Leftrightarrow \frac{\rho^{\alpha_{\max}}}{\gamma-\rho^{\alpha_{\max}}} \leq 2 \frac{\rho^{\alpha_{\max}}}{\gamma}.\]
 \end{enumerate}
Note, that the choice of $\gamma=1-c_3$ implies $ \frac{\gamma-2}{2\gamma}c_3 + \frac{1}{\gamma}+\frac{c_3}{2}=\gamma. $
Hence
\begin{align*}
 h(y)-h(z) & \leq \frac{M\gamma^k}{2} \mathbb{P}^y (T_A<\tau_k) + \frac{M\gamma^{k-1}}{r^\beta} (1-\mathbb{P}^y (T_A<\tau_k)) + \frac{c_3M\gamma^k}{2} \\
 & = M\gamma^{k} \left((\frac{\gamma-2}{2\gamma} \mathbb{P}^y (T_A<\tau_k)+\frac{1}{\gamma}+\frac{c_3}{2}\right) \\
 & \leq M\gamma^{k} \left( \frac{\gamma-2}{2\gamma} c_3 +\frac{1}{\gamma}+\frac{c_3}{2} \right) \\
 & = M\gamma^{k+1}.
\end{align*}
We conclude that
\[ b_{k+1}-a_{k+1}\leq M\gamma^{k+1}+2\varepsilon. \]
Since $\varepsilon$ is arbitrary, this proves \eqref{induction} and therefore  \eqref{boundedosc}.\\
Let $x,y\in M_1(x_0)$ and choose $k\in\N_0$ such that $M^2_{\rho^k}(x)$ is the smallest rectangle with 
$y\in M_{\rho^k}(x)$.\newline
Then $|x-y|\leq 2\sqrt{d}\rho^{k}$ and therefore  
\[k\geq \frac{\log \left(\frac{|x-y|}{2\sqrt{d}}\right)}{\log(\rho)}\quad \text{ for }  y\in M_{\rho^k}(x).\]
Hence 
\begin{align*}
 |h(y)-h(x)|& \leq M\gamma^k = Me^{k \log (\gamma)} \\
 & \leq M e^{(\log[|x-y|/(2\sqrt{d}))((\alpha_{\max}\log(\gamma))/(\alpha\log(\rho)))} \\
 &= \frac{M |x-y|^{\log(\gamma)/\log(\rho)}}{2\sqrt{d}} = c_5 M |x-y|^\beta.
\end{align*} 
Now let $h$ be harmonic on $M_r^2(x_0)$. Then $h'(x):=h(r^{\alpha_{\max}/\alpha_{\min}}x)$ is harmonic on $M_1^2(x_0)$. 
Let $x,y\in M_1(x_0)$ and $x',y'\in M_r(x_0)$ such that 
\[x=(x'_1/r^{\alpha_{\max}/\alpha_1},\dots,x'_d/r^{\alpha_{\max}/\alpha_d}), \quad  y=(y'_1/r^{\alpha_{\max}/\alpha_1},\dots,y'_d/r^{\alpha_{\max}/\alpha_d})\in M_1(x_0).\]
Then $|x-y| \leq r^{\alpha_{\max}/\alpha_{\min}}|x'-y'|.$
Set $\widetilde{x}=x'/r^{\alpha_{\max}/\alpha_{\min}}$ and $\widetilde{y}=y'/r^{\alpha_{\max}/\alpha_{\min}}$. 
We conclude
\begin{align*}
 |h(x')-h(y')| & = |h(r^{\alpha_{\max}/\alpha_{\min}}\widetilde{x})-h(r^{\alpha_{\max}/\alpha_{\min}}\widetilde{y})|= |h'(\widetilde{x})-h'(\widetilde{y})| \\ 
 &\leq c_1 |\widetilde{x}-\widetilde{y}|^\beta \sup\limits_{\widetilde{z}\in\R^d}|h(\widetilde{z})|\\
 & = c_1 \left|\frac{x'}{r^{\alpha_{\max}/\alpha_{\min}}}-\frac{y'}{r^{\alpha_{\max}/\alpha_{\min}}}\right|^\beta \sup\limits_{z\in\R^d}|h(z)| = c_1 \left(\frac{|x'-y'|}{r^{\alpha_{\max}/\alpha_{\min}}}\right)^\beta \sup\limits_{z\in\R^d}|h(z)|.
\end{align*}
\end{proof}

\end{document}